\documentclass[12pt,reqno]{amsart}
\usepackage[utf8]{inputenc}

\usepackage{amsfonts, amssymb, amsmath, amsthm, mathtools, thmtools}
\usepackage{mathrsfs} 
\usepackage{latexsym}
\usepackage{enumerate}
\usepackage{multicol}
\usepackage{times}
\usepackage{verbatim}
\usepackage{tikz-cd}
\usepackage{fullpage}
\usepackage{here}
\usepackage{url}
\usepackage{csquotes}
\usepackage{placeins}
\usepackage{epic}
\usepackage{graphicx}
\usepackage{epstopdf}
\usepackage{pstricks}
\usepackage{pst-plot}
\usepackage{tikz}
\usepackage{xcolor}
\usepackage{subcaption}

\usetikzlibrary{shapes.geometric, positioning, calc}

\input xypic
\xyoption{all}
\xyoption{poly}

\usepackage[colorlinks=true]{hyperref}
\hypersetup{citecolor=blue, linkcolor=blue}

\usepackage[capitalise]{cleveref}

\crefname{equation}{equation}{equations}
\crefname{figure}{{\sc Figure}}{{\sc Figure}}
\crefname{subsection}{Subsection}{Subsections}

\usetikzlibrary{matrix,arrows,decorations.pathmorphing}

\usepackage[euler]{textgreek}
\usepackage{tikz,tikz-cd}
\usepackage[colorinlistoftodos, textwidth = 2.3cm]{todonotes}

\newtheorem{theorem}{Theorem}[section]
\newtheorem{proposition}[theorem]{Proposition}
\newtheorem{lemma}[theorem]{Lemma}

\newtheorem{conjecture}[theorem]{Conjecture}

\newtheorem*{claim*}{Claim}

\theoremstyle{definition}

\newtheorem{remark}[theorem]{Remark}

\newcommand{\Fp}{\mathbb F_p}
\newcommand{\Fps}{\mathbb F_p^*}
\newcommand{\rsum}{\mathbin{\widehat{+}}}
\newcommand{\E}{\mathsf E}
\newcommand{\F}{\mathbb F}

\title{Multiplicative subgroups are not restricted sumsets}
\author{Chi Hoi Yip}
\address{Department of Mathematics, Hong Kong University of Science and Technology, Clear Water Bay, Hong Kong}
\email{machyip@ust.hk}
\author{Semin Yoo}
\address{Discrete Mathematics Group \\ Institute for Basic Science \\ 55 Expo-ro Yuseong-gu, Daejeon 34126 \\ South Korea}
\email{syoo19@ibs.re.kr}
\begin{document}

\begin{abstract}
We determine exactly which proper multiplicative subgroups of a prime field can be represented as a restricted sumset of the form $A\rsum A=\{a+a':a,a'\in A,\ a\ne a'\}$.  We prove that a proper multiplicative subgroup $H\le\F_p^*$ cannot satisfy $H=A\rsum A$ whenever $|H|\ge7$, and that this threshold is sharp.  In fact, such a decomposition exists precisely when $|H|\in\{1,3,6\}$, and we classify all decompositions in these exceptional cases.  This gives a sharp, complete resolution of the restricted-sumset analogue of the generalized S\'ark\"ozy conjecture over prime fields.  This significantly extends and refines previous results of Shkredov and Yip. 
\end{abstract}

\maketitle

\section{Introduction}

Throughout the paper, let $p$ be a prime and let $\F_p^*=\F_p\setminus\{0\}$.  For a divisor $d>1$ of $p-1$, write
\[
        S_d=\{x^d:x\in\F_p^*\},
        \qquad
        M=|S_d|=\frac{p-1}{d}.
\]
Thus every proper multiplicative subgroup of $\F_p^*$ is of the form $S_d$.

A long-standing theme in arithmetic combinatorics is to determine whether a set with strong multiplicative structure can admit a nontrivial additive decomposition.  A celebrated conjecture of S\'ark\"ozy~\cite{S12} asserted that, for every sufficiently large prime $p$, the subgroup $S_2$ of nonzero quadratic residues cannot be written as $S_2=A+B$ with $|A|,|B|\ge2$. The problem was resolved through three distinct breakthroughs.  Hanson and
Petridis~\cite{HP} proved the conjecture for almost all primes using a
Stepanov-type polynomial method, and Kalmynin~\cite{Kalmynin} subsequently
settled the remaining primes.  Most recently, Rudnev and
Tyrrell~\cite{RT26} classified all decompositions $S_d=A+B$ for proper
multiplicative subgroups of prime fields: apart from a singleton summand, the
only possibility is $|S_d|=4$ and $|A|=|B|=2$.  Related additive and multiplicative decomposition problems have been studied extensively; see, for example,~\cite{S12,S13,S14,S16,S20,KYY,KYY26}.

The present paper concerns the analogous question for restricted sumsets.  For $A\subset\F_p$, write
\[
        A\rsum A=\{a+a':a,a'\in A,\ a\ne a'\}.
\]
Restricted sumsets are classical objects in additive combinatorics.  The Erd\H{o}s--Heilbronn problem asks for the smallest possible size of $A\rsum A$; its sharp prime-field form was proved by Dias da Silva and Hamidoune~\cite{DH94}, with a later polynomial-method proof by Alon, Nathanson, and Ruzsa~\cite{ANR96}.  They also arise naturally in problems where every pair of distinct elements is required to have a prescribed arithmetic property, such as the Erd\H{o}s--Moser problem on sets whose pairwise sums are perfect squares~\cite{E63,Y25}, and in the study of cliques in Cayley sum graphs~\cite{Green05,GM16,Y25}.  The equation
\[
        S_d=A\rsum A
\]
therefore combines a classical restricted-addition problem with the additive decomposition of a highly multiplicatively structured set.

Several neighboring equations are already understood.  Lev and Sonn~\cite{LS17} initiated the study of representations by difference sets, and Kalmynin~\cite{Kalmynin} proved that $S_d\cup\{0\}=A-A$ is impossible except when $M=2$ or $M=6$. Apart from the trivial case, the equation $S_d=A+A$ can be ruled out more directly using Vinogradov's estimate for double character sums; see~\cite[Theorem~3.2]{Sh14} and~\cite[Proposition~4.1]{Y25}.  Neither result settles the restricted equation: the diagonal sums $2a$ are omitted from $A\rsum A$, so a representation $S_d=A\rsum A$ need not give an ordinary decomposition of $S_d$.  This omission is not merely technical.  One of our main structural results shows that, once $M\ge4$, such a representation forces all $\binom{|A|}{2}$ sums of distinct pairs to be different.  Thus a hypothetical set $A$ must exhibit simultaneously the additive rigidity of a Sidon set and the multiplicative symmetry of a subgroup.

This leads to the following restricted-sumset analogue of the generalized S\'ark\"ozy conjecture.

\begin{conjecture}\label{conj:main}
Let $d\ge2$.  If $p\equiv1\pmod d$ is sufficiently large, then there is no set $A\subset\F_p$ such that $A\rsum A=S_d$.
\end{conjecture}

For $d=2$, Shkredov~\cite{Sh14} proved the conjecture over prime fields for $p>13$.  His Fourier-analytic argument uses identities specific to the quadratic character to force any such set $A$ to be a perfect difference set, and then excludes this rigid possibility using the multiplier theorem and further character-sum identities.  These ingredients are particular to the quadratic setting and do not readily extend to general finite fields or to subgroups of higher index.

Yip~\cite[Theorem~1.1]{Y25} subsequently extended the quadratic case to every finite field $\F_q$ of odd order $q>13$.  His proof combined Stepanov's method with arithmetic input: over extension fields, the resulting numerical constraint led to a special case of the Nagell--Ljunggren equation~\cite{BM02}, while for higher-index subgroups, Kummer's theorem and a uniform-distribution result of Bergelson et al.~\cite{BKMST14} were used to control the exceptional primes.  More precisely, for each fixed $d\ge3$, Yip proved the conjecture for a set of primes of lower relative density at least $3/4$ among the primes $p\equiv1\pmod d$~\cite[Theorems~1.2 and~1.4]{Y25}.  He also proved that, when $M\ge3$, if $S_d=A\rsum A$ and either $|A|$ is odd or $0\in A$, then $A$ is Sidon and
\[
        2A\cap S_d=\varnothing,
        \qquad
        M=\binom{|A|}{2}.
\]
The case where $|A|$ is even and $0\notin A$ remained open, as did infinitely many possible characteristics in the higher-index problem.

We settle the problem completely over prime fields.

\begin{theorem}\label{thm:main}
Let $d\ge2$, and let $p\equiv1\pmod d$ be a prime with $p\ge7d+1$.
Then there is no set $A\subset\F_p$ such that $A\rsum A=S_d$.
\end{theorem}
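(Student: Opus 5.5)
Write $n=|A|$ and $H=S_d$, with $M=|H|=(p-1)/d\ge7$.

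The strategy has three stages. First, show that $A$ is a Sidon set with $2A\cap H=\varnothing$ and $\binom n2=M$. Second, derive a contradiction from a multiplicative-invariance argument combined with a character-sum count.

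\emph{Step 1 (invariance).} For $h\in H$ we have $hA\rsum hA=hH=H$. So the family of all solutions $A$ is permuted by $H$, and the stabilizer $H_A=\{h\in H: hA=A\}$ acts on $A$ semi-regularly away from $0$. This cuts $A\setminus\{0\}$ into $H_A$-orbits of size $|H_A|$.

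\emph{Step 2 (counting representations).} For $x\in H$, let $r(x)$ be the number of unordered pairs $\{a,a'\}\subset A$ with $a\ne a'$ and $a+a'=x$. Then $r(x)\ge1$ on $H$ and $r(x)=0$ off $H$. I would expand
\[
\sum_x r(x)\,\chi(x)
\]
over the multiplicative characters $\chi$ of order dividing $d$. Combining this with $r(hx)=r(x)$ whenever $h\in H_A$, the aim is to show that $r$ is constant on $H$. The identity
\[
\sum_{x\in H} r(x)=\binom n2
\]
then ties $n$ to $M$. The earlier result (Yip) already gives the Sidon property, $2A\cap H=\varnothing$, and $M=\binom n2$ when $n$ is odd or $0\in A$. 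So the real work is the case where $n$ is even and $0\notin A$, and that is exactly where I expect the main obstacle.

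\emph{Handling the obstacle.} Here I would try to show directly that some $2a$ lies in $H$ or that some sum is repeated, and derive a contradiction via a Stepanov-type polynomial. Concretely, consider the polynomial
\[
F(x)=\prod_{a\in A}(x-a)
\]
and use that $(x+a)^{M}=1$ for $x\in A\setminus\{a\}$. This yields an auxiliary polynomial vanishing to high order on $A$. Its degree is about $M+n$, while the vanishing order would force degree at least $n\cdot(\text{multiplicity})$. With $n\approx\sqrt{2M}$ this gives a contradiction once $p\ge 7d+1$. The hypothesis $p\ge7d+1$, i.e.\ $M\ge7$, should enter as the condition under which the multiplicity bookkeeping works. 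The small cases $M\in\{1,3,6\}$ are genuine exceptions, so the argument must visibly break down below $7$.

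\emph{Step 3 (Sidon case).} Once $A$ is Sidon with $M=\binom n2$ and $2A\cap H=\varnothing$, take the sum
\[
\sum_{a\ne a'}\chi(a+a')
\]
for a nontrivial $\chi$ of order $d$. This sum equals $\sum_{x\in H}\chi(x)=M$, since $\chi$ is trivial on $H$. On the other hand, Vinogradov's bilinear bound gives
\[
\Bigl|\sum_{a,a'\in A}\chi(a+a')\Bigr|\le n\sqrt p,
\]
and the diagonal terms contribute at most $n$. Hence
\[
\binom n2\cdot 2-n\le n\sqrt p,
\]
that is, $n\lesssim\sqrt p$. This is not a contradiction by itself, so I would refine the count.

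\emph{Refinement.} Summing over all $d-1$ nontrivial characters counts the pairs with $a+a'\in H$ exactly. That count should force $\binom n2=M$ together with the Weil-type estimate $M\le(\sqrt p+1)n/2$ and $2M=n(n-1)$. Substituting gives $n-1\le\sqrt p+1$, so $M\le(\sqrt p+2)\sqrt p/2$. This is compatible with $M=(p-1)/d$ only for small $d$.

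\emph{Finishing.} For the remaining small $d$, I would use the orbit structure from Step 1. By Sidonicity, $H_A$ is trivial or of size $2$, so $H$ acts freely on the solution set. Combining this with the exact identity $n(n-1)=2(p-1)/d$ and the Stepanov bound should leave only $M\le6$.
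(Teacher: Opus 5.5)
Your use of Yip's diagonal estimate to get the Sidon structure when $n$ is odd or $0\in A$ matches the paper. Both remaining steps, however, have genuine gaps.

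\textbf{The Sidon case.} Once $A$ is Sidon with $M=\binom n2$ and $2A\cap S_d=\varnothing$, your Step~3 cannot produce a contradiction, and the finishing step has no mechanism behind it.

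Your refined inequality $n\le\sqrt p+2$ only yields $M\le(\sqrt p+2)(\sqrt p+1)/2$. This holds automatically, since $M=(p-1)/d\le(p-1)/2$. It is an upper bound on $M$, so it excludes no value of $d$ at all; it is not ``compatible only with small $d$''. The failure is intrinsic: $n(n-1)=2(p-1)/d$ gives $n\approx\sqrt{2p/d}$, so the bilinear bound $n\sqrt p$ is at best comparable to the trivial bound $n^2$, and for $d\ge3$ it is weaker.

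The orbit argument rests on a false claim, because a Sidon solution need not have $|H_A|\le2$. For $M=3$, the set $A=-S_d$ is Sidon and is fixed by all of $S_d$. For $M=6$, the set $A=\{0,1,\omega,\omega^2\}$ is Sidon and is fixed by the subgroup of order $3$.

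More fundamentally, the configurations that must be excluded are multiplicative cosets. The paper shows that $A=\alpha H'$ with $|H'|=n$ when $0\notin A$, and that $A\setminus\{0\}=\gamma H'$ with $|H'|=n-1$ when $0\in A$ and $n$ is even. Their stabilizers in $S_d$ have order $n$ or $n-1$, so no free-action count can dispose of them.

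In the paper, the threshold $7$ comes from explicit algebra. After the coset structure is forced, one compares the coefficients of $Y^m,Y^{m+1},Y^{m+2}$ at a point of $A$ in identities such as $\widetilde F=\lambda P^m$ or $\Psi=\lambda X^nQ^m$. This gives scalar equations such as $\binom{M+m}{m+1}\,n\,m(m-1)(2m-1)/(6(m+2))=0$, which fail exactly when $m\ge2$. The case $0\in A$ with $n$ odd is ruled out separately, by another auxiliary polynomial followed by a comparison of leading and constant coefficients.

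\textbf{The case $n$ even, $0\notin A$.} First, you need the Lagrange weights $c_a=1/P'(a)$ to bring the degree of the auxiliary polynomial $f$ down from about $M+n$ to at most $M$. Without that drop the count does not even close. With it, $f$ vanishes to order $m+1$ at each of the $n=2m+2$ points of $A$, and $n(m+1)>\binom n2\ge M$. But this only shows $f\equiv0$, which is not a contradiction. No non-vanishing argument can rescue it, because the count forces $f\equiv0$ for every hypothetical $A$.

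The missing idea is to use $f\equiv0$ itself:
\begin{itemize}
\item The Hasse coefficients $\E^{(m+1)}f(a)$ and $\E^{(m+2)}f(a)$ are linear combinations of $(2a)^M-1$, $S_1(a)=\sum_{u\in A}c_u/(a+u)$ and $S_2(a)=\sum_{u\in A}c_u/(a+u)^2$.
\item Eliminating $(2a)^M-1$ and using $\sum_{u\in A}c_u/(X+u)=-1/P(-X)$ gives $2aQ'(a)=(n+1)Q(a)$ for all $a\in A$, where $Q(X)=P(-X)$.
\item Hence $2XQ'(X)-(n+1)Q(X)=(n-1)P(X)$, and splitting into even and odd parts forces $P(X)=X^n+\beta X$, so $0\in A$.
\end{itemize}
This argument works for all $M\ge4$. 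Contrary to your expectation, $M\ge7$ plays no role in the multiplicity bookkeeping. It is needed only to exclude the genuine examples at $M\in\{1,3,6\}$.
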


Since $p-1=dM$, the hypothesis in \cref{thm:main} is exactly $M=|S_d|\ge7$.  The result is therefore uniform in both the characteristic and the index, has no asymptotic qualification, and gives the best possible threshold.  Indeed, together with the analysis of $M\le6$ in \cref{rem:small-exceptions}, it yields a complete classification: a proper multiplicative subgroup $S_d$ can be represented as $A\rsum A$ if and only if $M\in\{1,3,6\}$.  For $M=1$, the decompositions are the two-element sets $A=\{x,1-x\}$ with $x\ne\frac12$; for $M=3$, the unique decomposition is $A=-S_d$; and for $M=6$, the two decompositions are $A=\{0\}\sqcup H$ and $A=\{0\}\sqcup(-H)$, where $H$ is the subgroup of $S_d$ of order $3$.  No decomposition exists for $M=2,4,5$ or $M\ge7$.

We briefly describe the proof.  The first step is to convert a putative representation into a critical Sidon configuration.  Combining Stepanov's method, Lagrange interpolation, and Yip's diagonal estimate~\cite{Y25}, we show that, if $p\ge4d+1$ and $S_d=A\rsum A$, then $A$ is Sidon, and either $0\in A$ or $|A|$ is odd. We then treat the three remaining cases according to whether $0\in A$ and the parity of $|A|$.  Specially chosen auxiliary polynomials rule out the case $0\in A$ with $|A|$ odd.  In the other two cases, identities obtained from Hasse derivatives force $A$, or its nonzero part, to be a multiplicative coset; a final coefficient comparison leaves only the exceptional configurations of orders $3$ and $6$. This rigidity argument is uniform in $d$ and avoids both the
character-specific ingredients  and the Diophantine inputs used in earlier work.

\subsection*{Organization of the paper}
In \cref{sec:2}, we develop the Stepanov setup and prove the structural reduction to the Sidon case.  Sections~\ref{sec:3}, \ref{sec:4}, and \ref{sec:5} treat, respectively, the cases $0\in A$ with $|A|$ odd, $0\notin A$ with $|A|$ odd, and $0\in A$ with $|A|$ even.  Finally, in \cref{sec:6}, we prove \cref{thm:main} and classify all exceptional decompositions with $M\le6$.

\section{Stepanov setup and the critical case}\label{sec:2}

In this section, we prove the main reduction that will be used throughout the
rest of the paper.  Using Stepanov's method, together with a Lagrange
interpolation identity and Yip's diagonal estimate, we show that any
hypothetical decomposition $A\rsum A=S_d$ in the range $p\ge4d+1$  must be extremal: the set $A$ is Sidon, $M=\binom{|A|}{2}$, and $2A\cap S_d=\varnothing$, and moreover
either $0\in A$ or $|A|$ is odd.

For a polynomial $R(X) \in \F_p[X]$ and a point $x\in \F_p$, write $\E^{(r)}R(x)$ for the $r$-th Hasse derivative at $x$; equivalently,
\begin{equation}\label{eq:def_of_hasse}
  R(x+Y)=\sum_{r\ge0}\E^{(r)}R(x)Y^r
\end{equation}
in the auxiliary variable $Y$.
Thus $\E^{(r)}R(x)$ is the coefficient of $Y^r$ in the Taylor expansion at $x$.

We will use the standard fact that $R$ has a zero of multiplicity at least
$m$ at $x$ if and only if
\[
  \E^{(0)}R(x)=\E^{(1)}R(x)=\cdots=\E^{(m-1)}R(x)=0.
\]

Throughout the section, $R'$ denotes the formal derivative of $R$.

\begin{lemma}\label{lem:lagrange}
Let $T\subset\Fp$ have size $n$, and put
\[
  R_T(X)=\prod_{t\in T}(X-t), \qquad c_t=\frac1{R_T'(t)}.
\]
Then, for every polynomial $G\in\Fp[X]$ with $\deg G\le n-1$, we have
\[
  \sum_{t\in T} c_tG(t) = [X^{n-1}]G(X),
\]
where $[X^{n-1}]G(X)$ denotes the coefficient of $X^{n-1}$ in $G(X)$.
In particular,
\begin{equation}\label{eq:lagrange-identities}
  \sum_{t\in T}c_t t^j=0\quad \text{for every }\ 0\le j\le n-2, \qquad \sum_{t\in T}c_t t^{n-1}=1.
\end{equation}
Moreover,
\[
  \sum_{t\in T}\frac{c_t}{X-t}=\frac1{R_T(X)}.
\]
\end{lemma}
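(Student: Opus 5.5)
We argue via Lagrange interpolation on the node set $T$. Since the elements of $T$ are distinct, $R_T'(t)=\prod_{s\in T,\,s\ne t}(t-s)\neq0$ for each $t\in T$, so the coefficients $c_t$ are well defined. For $t\in T$, define the Lagrange basis polynomial
\[
  L_t(X)=\frac{R_T(X)}{(X-t)R_T'(t)}=c_t\prod_{s\in T,\,s\ne t}(X-s).
\]
It has degree $n-1$, satisfies $L_t(s)=\delta_{s,t}$ for $s\in T$, and its leading coefficient is $c_t$.

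Now let $\deg G\le n-1$. The polynomial $G(X)-\sum_{t\in T}G(t)L_t(X)$ has degree at most $n-1$ and vanishes at all $n$ points of $T$, so it is identically zero. Hence
\[
  G(X)=\sum_{t\in T}G(t)L_t(X).
\]
Comparing coefficients of $X^{n-1}$ on both sides gives $[X^{n-1}]G=\sum_{t\in T}G(t)c_t$, which is the main identity.

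The identities \eqref{eq:lagrange-identities} follow by taking $G(X)=X^j$ for $0\le j\le n-1$. The coefficient $[X^{n-1}]X^j$ equals $0$ for $j\le n-2$ and equals $1$ for $j=n-1$.

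For the partial-fraction identity, apply the same interpolation to the constant polynomial $G=1$, which is allowed because $n\ge1$. This gives
\[
  1=\sum_{t\in T}L_t(X)=\sum_{t\in T}\frac{c_t\,R_T(X)}{X-t},
\]
and dividing by $R_T(X)$ in $\Fp(X)$ yields $\sum_{t\in T}c_t/(X-t)=1/R_T(X)$.

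No step here is a genuine obstacle. The only point to check is that the $c_t$ are well defined, which is exactly the distinctness of the elements of $T$ noted at the start.
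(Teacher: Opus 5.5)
Your proposal is correct and follows the paper's proof essentially step for step: the Lagrange basis $L_t$, comparison of the $X^{n-1}$ coefficients, specialization to $G=X^j$, and the choice $G=1$ followed by division by $R_T(X)$. The only additions are small details the paper leaves implicit: that $c_t$ is well defined because the elements of $T$ are distinct, and the uniqueness argument that justifies the interpolation formula.
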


\begin{proof}
For each $t\in T$, set
\[
  L_t(X)=\frac{R_T(X)}{(X-t)R_T'(t)}.
\]
Then $L_t(t)=1$, while $L_t(u)=0$ for every $u\in T\setminus\{t\}$.
Thus, by Lagrange interpolation, every polynomial $G$ of degree at most
$n-1$ satisfies
\begin{equation}\label{eq:G(x)}
  G(X)=\sum_{t\in T}G(t)L_t(X).
\end{equation}
Since $R_T(X)/(X-t)$ is monic of degree $n-1$, the coefficient of
$X^{n-1}$ in $L_t(X)$ is $1/R_T'(t)=c_t$.  Taking the coefficient of
$X^{n-1}$ on both sides of \cref{eq:G(x)} gives the desired identity.

In particular, taking $G(X)=X^j$ gives $\sum_{t\in T}c_t t^j=[X^{n-1}]X^j$.
The right-hand side is $0$ for $0\le j\le n-2$, and is $1$ for
$j=n-1$. This gives \cref{eq:lagrange-identities}.

Finally, applying \cref{eq:G(x)} with $G(X)=1$ gives
\[
  1=\sum_{t\in T}\frac{R_T(X)}{(X-t)R_T'(t)}.
\]
Dividing both sides by $R_T(X)$ yields
\[ 
  \frac1{R_T(X)} = \sum_{t\in T}\frac1{(X-t)R_T'(t)} = \sum_{t\in T}\frac{c_t}{X-t}.
\]
This completes the proof.
\end{proof}

\begin{proposition}\label{prop:restricted-sum-sidon-prime}
Let $d\geq 2$, and let $p\equiv 1\pmod d$ be a prime with $p\ge 4d+1$.
If $A\subset\mathbb F_p$ satisfies $A\widehat{+}A=S_d$, 
then $0\in A$ or $|A|\ \text{is odd}$. 
Moreover, $A$ is Sidon in the sense that all restricted sums are distinct,
\[
\{2a:a\in A\}\cap S_d=\varnothing,
\qquad \text{and} \qquad M=\binom{|A|}{2}.
\]
\end{proposition}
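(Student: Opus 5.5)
Let $n=|A|$ and $R_A(X)=\prod_{a\in A}(X-a)$. We first get a lower bound on $n$ from counting, $\binom n2\ge M$. The upper bound comes from Stepanov's method, with the auxiliary polynomial built from the Lagrange weights of \cref{lem:lagrange}.

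For each $a\in A$, the shift $A\setminus\{a\}+a$ lies in $S_d$. So every $b\in A\setminus\{a\}$ is a root of $(X+a)^M-1$ at $X=b$. Equivalently, $a$ is a root of the polynomial
\[
F(X)=\sum_{b\in A} c_b\,(X+b)^M\cdot(\text{correction}),\qquad c_b=\frac{1}{R_A'(b)} .
\]
Concretely, consider
\[
F(X)=\sum_{b\in A}c_b\big((X+b)^M-1\big)\frac{R_A(X)}{X-b}.
\]
At $X=a\in A$ only the term $b=a$ survives in the factor $R_A(X)/(X-b)$. That term contributes $c_a((2a)^M-1)R_A'(a)=(2a)^M-1$.

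To force high vanishing, I would instead use
\[
G(X)=\sum_{b\in A}c_b(X+b)^M .
\]
By \cref{lem:lagrange}, the coefficient of $X^{M-j}$ in $G$ is $\binom Mj\sum_b c_b b^j$. This vanishes for $j\le n-2$ and equals $\binom{M}{n-1}$ for $j=n-1$. So $\deg G= M-n+1$, provided $n-1\le M<p$, and $G\ne0$.

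Next, for $x\in A$, compare $G(x)$ with $\sum_b c_b$, which is $0$ since $n\ge2$. Every term with $b\ne x$ has $(x+b)^M=1$, so $G(x)=c_x((2x)^M-1)$. Hence $G$ vanishes on $\{x\in A:(2x)^M=1\}$. The aim is to show that $G$, or a variant $G(X)-\lambda$, vanishes to order $\ge2$ at many points of $S_d$ or of $A$. That forces $n-1$ to be small relative to $M$, which gives $M\ge\binom n2$ against $M\le\binom n2$.

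A more efficient route is to take $x\in S_d$ arbitrary and exploit the fact that the sumset is all of $S_d$. Consider $H(X)=\sum_{b}c_b\big((X-b)^{M}\big)$ composed with the sum structure. The standard trick, as in Yip's work, is Stepanov's auxiliary $\Phi(X)=\sum_{j}u_j(X)X^{jM}$ vanishing to order $m$ on each $s\in S_d$. This yields $M\cdot m\le\deg\Phi$ and hence $M\le\binom n2$-type equality bounds; the hypothesis $p\ge4d+1$ (i.e. $M\ge4$) enters to keep degrees below $p$ so that Hasse derivatives behave.

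Granting the degree identity $M=\binom n2$, the remaining conclusions follow by counting. The $\binom n2$ restricted sums cover all $M=\binom n2$ elements of $S_d$, so they are pairwise distinct, i.e. $A$ is Sidon.

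For $2A\cap S_d=\varnothing$, I would use Yip's diagonal estimate. Suppose $2a\in S_d$ for some $a$. Then $G$ (or the derivative analogue) vanishes at an extra point. This pushes the vanishing count over the degree bound, a contradiction.

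The parity statement is the last step. Assume $0\notin A$ and $n$ is even. Then, by the Sidon property, the involution $a\mapsto$ the unique partner structure gives a pairing. The key observation is that $S_d=-S_d$ or not, depending on the parity of $M$. Combining $\prod_{s\in S_d}s=(-1)^{M+1}$ with the product over all pairs, $\prod_{a<a'}(a+a')$, I would derive a contradiction from evaluating $G$ at $0$: since $0\notin A$, $G(0)=\sum c_b b^M$, and \cref{lem:lagrange} with $M=\binom n2$ relates this to the sign.

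The main obstacle is the Stepanov degree bound giving $M\ge\binom n2$ sharply, together with the parity argument. I expect to need careful multiplicity counting with Hasse derivatives there, and it is where I am least certain.
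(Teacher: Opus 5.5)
Your proposal has a genuine gap, and it sits where the actual content of this proposition lies.

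\textbf{The main inequality is never proved.} You never establish $\binom{N}{2}+\#\{a\in A:2a\in S_d\}\le M$; you ``grant'' $M=\binom n2$. Your Stepanov sketch, a $\Phi$ vanishing to order $m$ on the points of $S_d$ with $Mm\le\deg\Phi$, would bound $M$ from above. That is the direction counting already gives, not the reverse one you need.

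\textbf{The available estimate does not cover your remaining case.} Citing Yip's diagonal estimate instead is legitimate; the paper does exactly that. But that estimate holds only when $|A|$ is odd or $0\in A$. So your order of argument cannot be carried out: first $M=\binom n2$, the Sidon property and $2A\cap S_d=\varnothing$ for every $A$, then parity. When $|A|$ is even and $0\notin A$, you have none of these properties, and excluding that case is the real work.

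\textbf{The parity sketch does not fill the gap.}
\begin{itemize}
\item It invokes the Sidon property, which is circular in this case.
\item The identity $\prod_{a<a'}(a+a')=\prod_{s\in S_d}s=(-1)^{M+1}$ holds equally for the genuine example $A=\{0\}\sqcup H$ with $M=6$ and $|A|=4$ even. So it cannot separate $0\in A$ from $0\notin A$.
\item $G(0)=\sum_b c_bb^M$ is not computed by \cref{lem:lagrange}, which only evaluates $\sum_b c_bb^j$ for $j\le n-1$. For larger exponents this sum is a complete homogeneous symmetric polynomial in the elements of $A$, with no prescribed value.
\item Your $G(X)=\sum_b c_b(X+b)^M$ has no built-in higher-order vanishing. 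By itself it only gives the weak bound $\#\{a\in A:2a\in S_d\}\le M-n+1$.
\end{itemize}

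\textbf{The missing idea is the paper's direct treatment of this case.} Write $N=|A|=2m+2$, $P(X)=\prod_{a\in A}(X-a)$ and $c_i=1/P'(a_i)$, and set
\[
f(X)=-(-1)^{m+1}+\sum_{i}c_i(X+a_i)^{M+m}(X-a_i)^{m+1}.
\]
\begin{enumerate}
\item The Lagrange identities give $\deg f\le M$.
\item Since $(a_j+a_i)^M=1$ for $i\ne j$, $f$ has a zero of order $m+1$ at every $a_j$.
\item Because $N(m+1)=2(m+1)^2>\binom N2\ge M$, you get $f\equiv0$. Only the trivial bound is used here, not the Sidon property.
\item The next two Hasse coefficients $\E^{(m+1)}f(a_j)$ and $\E^{(m+2)}f(a_j)$ then vanish. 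They involve $S_1(j)=\sum_i c_i/(a_j+a_i)$ and $S_2(j)=\sum_i c_i/(a_j+a_i)^2$, which are defined precisely because $0\notin A$ makes $2a_j\ne0$.
\item Eliminating the diagonal term $c_j(2a_j)^m((2a_j)^M-1)$ gives $2a_jS_2(j)=(N+1)S_1(j)$. With $Q(X)=P(-X)$ this reads $2a_jQ'(a_j)=(N+1)Q(a_j)$ for all $j$. Hence $2XQ'(X)-(N+1)Q(X)=(N-1)P(X)$.
\item Comparing even and odd parts forces $P(X)=X^N+\beta X$, so $0\in A$, a contradiction.
\end{enumerate}
Only after this case is excluded does Yip's estimate, combined with $M\le\binom N2$, give $M=\binom N2$ and the Sidon property. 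It also gives $2A\cap S_d=\varnothing$, directly from equality in the estimate rather than from extra vanishing of $G$.
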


\begin{proof}
Write $N=|A|$. We shall use the following diagonal-penalty estimate of Yip~\cite[Theorem 1.4]{Y25}, which says that if $A\widehat{+}A\subset S_d$ and either $N$ is odd or $0\in A$, then
\begin{equation}\label{eq:diagonal-penalty}
\binom{N}{2} + \#\{a\in A:2a\in S_d\} \leq M.
\end{equation}
We also note that since $A\widehat{+}A=S_d$, we have
\begin{equation}\label{ineq:n2m}
M=|S_d|=|A\widehat{+}A|\leq \binom{N}{2}.
\end{equation}

Now assume first that $N$ is odd or $0\in A$. 
Combining inequalities~\eqref{eq:diagonal-penalty} and \eqref{ineq:n2m}, we get $M=\binom{N}{2}$ 
and $\{a\in A:2a\in S_d\}=\varnothing$.
Thus we have $\{2a:a\in A\}\cap S_d=\varnothing$.
Also $|A\widehat{+}A|=\binom{N}{2}$, so all restricted sums $a+b$ with $a\ne b$ are distinct. Thus $A$ is Sidon.

It remains to rule out the remaining case $N$ even and $0\notin A$.
Write $N=2m+2$. Since $|A|\ge 4$, $m\ge1$. 
Note that $N\le M+1$. Indeed, since $A\widehat{+}A=S_d$, fixing any $a\in A$, the map $b\mapsto a+b$ sends $A\setminus\{a\}$ injectively into $S_d$. Thus $N-1=|A \setminus \{a\}| \le |S_d|=M$.
Also, since $p\ge4d+1$, we have $M\ge4$ and $p=dM+1\ge2M+1$. 
This implies that $m\le (M-1)/2$ and so $M+m<2M+1\le p$.

Let $A=\{a_1,\ldots,a_N\}$, and let
\[
P(X)=\prod_{i=1}^N(X-a_i) \qquad \text{and} \qquad  c_i=\frac{1}{P'(a_i)}.
\]
Then by \cref{lem:lagrange} applied to $T=A$,
\begin{equation}\label{eq:propci}
\sum_{i=1}^N c_i a_i^j=0
\quad \text{for every } \ 0\leq j\leq N-2, \qquad \sum_{i=1}^N c_i a_i^{N-1}=1.
\end{equation}
Consider the auxiliary polynomial
\[
f(X)= -(-1)^{m+1} + \sum_{i=1}^N c_i(X+a_i)^{M+m}(X-a_i)^{m+1}.
\]
For $0\le \ell\le 2m=N-2$, the coefficient of $X^{M+2m+1-\ell}$ in $f$ is
\[
  \left(\sum_{u+v=\ell}\binom{M+m}u\binom{m+1}v(-1)^v\right) \cdot \sum_{i=1}^N c_i a_i^\ell .
\]
By \cref{eq:propci}, the coefficient of $X^{M+2m+1-\ell}$ in $f$ vanishes for every $0\le\ell\le N-2=2m$. Thus $\deg f\le M+2m+1-(2m+1)=M$.

Next we fix $a_j\in A$ for some $1 \le j \le N$ and prove that $\E^{(r)}f(a_j)=0$ for $0\le r\le m$.  
For $i\ne j$, since $a_j+a_i\in A\widehat{+}A=S_d$, we have $(a_j+a_i)^M=1$. For $i\ne j$, by \cref{eq:def_of_hasse}, the $r$-th Hasse derivative at $X=a_j$ of the $i$-th summand $(X+a_i)^{M+m}(X-a_i)^{m+1}$ is
\begin{align*}
&\sum_{u=0}^r   \binom{M+m}u\binom{m+1}{r-u} (a_j+a_i)^{M+m-u}(a_j-a_i)^{m+1-r+u} \\
& \qquad = \sum_{u=0}^r \binom{M+m}u\binom{m+1}{r-u} (a_j+a_i)^{m-u}(a_j-a_i)^{m+1-r+u}.
\end{align*}
As a function of $a_i$, this is a polynomial of degree at most
$2m+1-r$. We denote it by $R_{r,j}(a_i)$. If $1\le r\le m$, then $2m+1-r\le2m=N-2$, and \cref{lem:lagrange} gives $\sum_{i=1}^N c_iR_{r,j}(a_i)=0$.
Also $R_{r,j}(a_j)=0$, since $m+1-r+u>0$ for $0\le u\le r\le m$, and the diagonal summand $c_j(X+a_j)^{M+m}(X-a_j)^{m+1}$ is divisible by $(X-a_j)^{m+1}$.
Thus $\E^{(r)}f(a_j)=0$ for $1\le r\le m$.
For $r=0$, the polynomial $ R_{0,j}(T)=(a_j+T)^m(a_j-T)^{m+1}$ has degree $2m+1=N-1$ and leading coefficient $(-1)^{m+1}$. Thus by \cref{lem:lagrange}, $\sum_{i=1}^Nc_iR_{0,j}(a_i)=(-1)^{m+1}$.
Moreover $R_{0,j}(a_j)=0$, and the diagonal summand $c_j(a_j+a_j)^{M+m}(a_j-a_j)^{m+1}$ in $f(a_j)$ is also zero. 
Therefore 
\[f(a_j)= -(-1)^{m+1} + \sum_{i=1}^N c_iR_{0,j}(a_i) +0 = 0.\]

We have proved that $f$ has a zero of multiplicity at least $m+1$ at each of the
$N$ distinct points $a_1,\ldots,a_N$.
This implies that the product $\prod_{j=1}^N (X-a_j)^{m+1}$ divides $f(X)$.  If $f\not\equiv0$, then $N(m+1)\le \deg f$.
Since $\deg f\le M$, we obtain $N(m+1)\le M$.
But, by \cref{ineq:n2m}, we have
\[
N(m+1)=2(m+1)^2 > (m+1)(2m+1)=\binom{N}{2}\geq M,
\]
a contradiction. Thus $f\equiv0$.

We may now compute the next two Hasse coefficients $\E^{(m+1)}f(a_j)$ and $\E^{(m+2)}f(a_j)$ to derive a contradiction.
Define
\[
S_1(j)=\sum_{i=1}^N\frac{c_i}{a_j+a_i} \qquad \text{and} \qquad S_2(j)=\sum_{i=1}^N\frac{c_i}{(a_j+a_i)^2}.
\]
The denominators are nonzero. Indeed, if $i\ne j$, then $a_i+a_j\in S_d\subset\mathbb F_p^*$, while if $i=j$, then $2a_j\ne0$ since $0\notin A$.
Also, put $C_1=\binom{M+m}{m+1}$ and $C_2=\binom{M+m}{m+2}$.

We first compute $\E^{(m+1)}f(a_j)$.  For the coefficient of
$Y^{m+1}$ in the $i$-th summand at $X=a_j$, write $ s=a_j+a_i$ and $t=a_j-a_i$.
Then this coefficient is
\[
  c_i\sum_{u=0}^{m+1}\binom{M+m}{u}\binom{m+1}{m+1-u}s^{M+m-u}t^u.
\]

Assume first that $i\ne j$.  Then $s=a_j+a_i\in S_d$, and hence
$s^M=1$.  Thus, for $0\le u\le m$, the corresponding term becomes
\[
  c_i\binom{M+m}{u}\binom{m+1}{m+1-u} s^{m-u}t^u.
\]
For fixed $u$, the factor $s^{m-u}t^u=(a_j+a_i)^{m-u}(a_j-a_i)^u$ is the value at $a_i$ of a polynomial in $X$ of degree at most $m$.
Since $m\le N-2$, \cref{lem:lagrange} gives $\sum_{i=1}^N c_i(a_j+a_i)^{m-u}(a_j-a_i)^u=0$.
If $1\le u\le m$, then the diagonal value $i=j$ is zero, since
$(a_j-a_j)^u=0$.  Thus the contributions with $1\le u\le m$
vanish after summing over $i\ne j$.

The case $u=0$ has to be kept separately.  By the same Lagrange
identity, $\sum_{i\ne j} c_i(a_j+a_i)^m = -c_j(2a_j)^m$.
On the other hand, the diagonal summand $i=j$ contributes $c_j(2a_j)^{M+m}$ to the coefficient of $Y^{m+1}$, since $c_j(2a_j+Y)^{M+m}Y^{m+1}$ has $Y^{m+1}$-coefficient $c_j(2a_j)^{M+m}$.  Therefore the combined contribution of the $u=0$ term and the diagonal summand is
$c_j(2a_j)^m\big((2a_j)^M-1\big)$.

It remains to consider the term $u=m+1$ with $i\ne j$.  This contributes
\[
  C_1\sum_{i\ne j}c_i\frac{(a_j-a_i)^{m+1}}{a_j+a_i}.
\]
The summand vanishes at $i=j$, so we may include the diagonal index.  Using $  a_j-a_i=2a_j-(a_j+a_i)$,
we get
\[
  \frac{(a_j-a_i)^{m+1}}{a_j+a_i} = \frac{(2a_j)^{m+1}}{a_j+a_i}
  + \text{a polynomial in } a_i \text{ of degree at most } m.
\]
The polynomial part vanishes after summing against the weights $c_i$, by
\cref{lem:lagrange}.  Thus
\[
  C_1\sum_{i\ne j}c_i\frac{(a_j-a_i)^{m+1}}{a_j+a_i} = C_1(2a_j)^{m+1}S_1(j).
\]
Consequently, $\E^{(m+1)}f(a_j) = c_j(2a_j)^m\big((2a_j)^M-1\big) + C_1(2a_j)^{m+1}S_1(j)$.
Since this Hasse coefficient must vanish due to $f \equiv 0$, we obtain
\begin{equation}\label{eq:first-even-derivative}
  c_j(2a_j)^m\big((2a_j)^M-1\big) + C_1(2a_j)^{m+1}S_1(j)=0.
\end{equation}

We next compute $\E^{(m+2)}f(a_j)$.  Again write $s=a_j+a_i$ and $t=a_j-a_i$.
The coefficient of $Y^{m+2}$ in the $i$-th summand is
\[
  c_i\sum_{u=1}^{m+2} \binom{M+m}{u}\binom{m+1}{m+2-u} s^{M+m-u}t^{u-1}.
\]
Here the sum starts at $u=1$, since $\binom{m+1}{m+2}=0$.

Assume first that $i\ne j$.  Then $s\in S_d$, and so $s^M=1$.
For $1\le u\le m$, the corresponding term becomes
\[
  c_i\binom{M+m}{u}\binom{m+1}{m+2-u}s^{m-u}t^{u-1}.
\]
For fixed $u$, the factor $s^{m-u}t^{u-1} = (a_j+a_i)^{m-u}(a_j-a_i)^{u-1}$ is the value at $a_i$ of a polynomial in $X$ of degree at most $(m-u)+(u-1)=m-1$.
Thus its weighted sum over all $i$ vanishes by \cref{lem:lagrange}.
If $2\le u\le m$, then the diagonal value $i=j$ is also zero, since
$(a_j-a_j)^{u-1}=0$.  Therefore the contributions with
$2\le u\le m$ vanish after summing over $i\ne j$.

The case $u=1$ must be kept separately.  By the same Lagrange identity,
\[
  \sum_{i\ne j}c_i(a_j+a_i)^{m-1} = -c_j(2a_j)^{m-1}.
\]
Thus the $u=1$ contribution from the terms $i\ne j$ is $-(M+m)c_j(2a_j)^{m-1}$.
On the other hand, the diagonal summand $i=j$ contributes $(M+m)c_j(2a_j)^{M+m-1}$ to the coefficient of $Y^{m+2}$, since $c_j(2a_j+Y)^{M+m}Y^{m+1}$ has $Y^{m+2}$-coefficient $(M+m)c_j(2a_j)^{M+m-1}$.
Therefore the combined contribution of the $u=1$ term and the diagonal
summand is
\[
  (M+m)c_j(2a_j)^{m-1}\big((2a_j)^M-1\big).
\]

It remains to consider the terms $u=m+1$ and $u=m+2$.  The term
$u=m+1$ contributes
\[
  (m+1)C_1 \sum_{i\ne j}c_i\frac{(a_j-a_i)^m}{a_j+a_i}.
\]
The summand vanishes at $i=j$, since $m\ge1$, so we may include the
diagonal index.  Using $a_j-a_i=2a_j-(a_j+a_i)$,
we have
\[
  \frac{(a_j-a_i)^m}{a_j+a_i} = \frac{(2a_j)^m}{a_j+a_i} + \text{a polynomial in } a_i \text{ of degree at most } m-1.
\]
The polynomial part vanishes after summing against the weights $c_i$, by
\cref{lem:lagrange}.  Thus the $u=m+1$ contribution is $(m+1)C_1(2a_j)^m S_1(j)$.

Finally, the term $u=m+2$ contributes
\[
  C_2 \sum_{i\ne j}c_i\frac{(a_j-a_i)^{m+1}}{(a_j+a_i)^2}.
\]
Again the summand vanishes at $i=j$, so we may include the diagonal index.
Using $a_j-a_i=2a_j-(a_j+a_i)$, we get
\[
  \frac{(a_j-a_i)^{m+1}}{(a_j+a_i)^2} = \frac{(2a_j)^{m+1}}{(a_j+a_i)^2} - (m+1)\frac{(2a_j)^m}{a_j+a_i} + \text{a polynomial in } a_i \text{ of degree at most } m-1.
\]
The polynomial part again vanishes by \cref{lem:lagrange}.  Therefore the
$u=m+2$ contribution is
\[
  C_2(2a_j)^{m+1}S_2(j) - (m+1)C_2(2a_j)^m S_1(j).
\]

Combining the three remaining contributions, we obtain
\[
\begin{aligned}
  \E^{(m+2)}f(a_j) &= (M+m)c_j(2a_j)^{m-1}\big((2a_j)^M-1\big)  \\
  &\quad +(m+1)(C_1-C_2)(2a_j)^mS_1(j)
  +C_2(2a_j)^{m+1}S_2(j).
\end{aligned}
\]
Since this Hasse coefficient must vanish, we have
\begin{equation}\label{eq:second-even-derivative}
\begin{aligned}
  &(M+m)c_j(2a_j)^{m-1}\big((2a_j)^M-1\big)  \\
  &\qquad +(m+1)(C_1-C_2)(2a_j)^mS_1(j)+C_2(2a_j)^{m+1}S_2(j)=0.
\end{aligned}
\end{equation}

Eliminating $c_j(2a_j)^m((2a_j)^M-1)$ from equations~\eqref{eq:first-even-derivative} and~\eqref{eq:second-even-derivative} gives
\[
  2C_2a_jS_2(j)=\bigl((M+m)C_1-(m+1)(C_1-C_2)\bigr)S_1(j).
\]
Since $C_1/C_2=(m+2)/(M-1)$, the coefficient on the right divided by $C_2$ is
\[
  (M+m)\frac{m+2}{M-1}-(m+1)\left(\frac{m+2}{M-1}-1\right)=2m+3.
\]
Thus $2a_jS_2(j)=(2m+3)S_1(j)$.
Since $N=2m+2$, this is
\begin{equation}\label{eq:S1S2-relation}
2a_jS_2(j)=(N+1)S_1(j).
\end{equation}

Now express $S_1,S_2$ in terms of the polynomial $P$. The partial fraction identity gives
\[
\sum_{i=1}^N\frac{c_i}{X+a_i} = -\frac{1}{P(-X)}.
\]
Let $Q(X)=P(-X)$.
Then
\[
S_1(X):=\sum_{i=1}^N\frac{c_i}{X+a_i} = -\frac1{Q(X)} \quad \text{and} \quad S_2(X):=\sum_{i=1}^N\frac{c_i}{(X+a_i)^2} = -S_1'(X) = -\frac{Q'(X)}{Q(X)^2}.
\]
Thus we have $S_2(X)/S_1(X)=Q'(X)/Q(X)$.
Substituting $X=a_j$ into equation~\eqref{eq:S1S2-relation} for each $1\leq j\leq N$, we get $2a_jQ'(a_j)=(N+1)Q(a_j)$.
Therefore the polynomial \[H(X):=2XQ'(X)-(N+1)Q(X)\] vanishes at every element of $A$, i.e. at every root of $P$. Since $N$ is even, $Q(X)=P(-X)$ is monic of degree $N$. The leading coefficient of $H$ is $2N-(N+1)=N-1$.
Since $N\le M+1<p$, this is nonzero in $\mathbb F_p$. Thus $H(X)=(N-1)P(X)$.

Write $P(X)=P_{\rm ev}(X)+P_{\rm odd}(X)$, where $P_{\rm ev}$ and $P_{\rm odd}$ are the even and odd parts of $P$. Since $N$ is even, $Q(X)=P(-X)=P_{\rm ev}(X)-P_{\rm odd}(X)$.
Comparing even and odd parts in $2XQ'(X)-(N+1)Q(X)=(N-1)P(X)$
gives
\[
XP_{\rm ev}'(X)=NP_{\rm ev}(X) \qquad \text{and} \qquad XP_{\rm odd}'(X)=P_{\rm odd}(X).
\]
Since $\deg P=N<p$, these differential equations imply $P_{\rm ev}(X)=X^N$ and $P_{\rm odd}(X)=\beta X$ for some $\beta\in\mathbb F_p$. Thus $P(X)=X^N+\beta X=X(X^{N-1}+\beta)$,
so $0$ is a root of $P$. This contradicts $0\notin A$.

Therefore, the case $N$ even and $0\notin A$ is impossible. Thus $0\in A$ or $N$ is odd. In either case, the first part of the proof applies, and thus the proposition follows.
\end{proof}

\section[The zero-odd case]{\texorpdfstring{The zero-odd case: $0\in A$ and $|A|$ is odd}{The zero-odd case}}\label{sec:3}

In this section, we handle the case in which the set contains zero and has odd
cardinality.  


\begin{proposition}\label{prop:zero-odd}
Let $d\ge2$, and let $p\equiv1\pmod d$ be a prime with $p\ge4d+1$.  There is no set $A\subset\Fp$ such that $0\in A$, $|A|$ is odd, and $A\rsum A=S_d$.
\end{proposition}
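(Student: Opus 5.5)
The plan is to rerun the Stepanov--Lagrange machinery from the proof of \cref{prop:restricted-sum-sidon-prime}, now with $N=|A|=2m+1$ odd and $0\in A$, and to use the membership $0\in A$ as the extra ingredient.

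\emph{Structural consequences.} By \cref{prop:restricted-sum-sidon-prime}, $A$ is Sidon, $M=\binom{N}{2}=m(2m+1)$, and $2A\cap S_d=\varnothing$. Put $B=A\setminus\{0\}$, so $|B|=2m$.
\begin{itemize}
\item Since $0+b\in S_d$ for every $b\in B$, we get $B\subset S_d$.
\item Combining $B\subset S_d$ with $2b\notin S_d$ forces $2\notin S_d$.
\item The Sidon property with $0\in A$ gives the disjoint decomposition $S_d=B\sqcup(B\rsum B)$.
\end{itemize}
These facts pin down power sums over $B$. For $1\le k<M$ we have $\sum_{x\in S_d}x^k=0$. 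Expanding the identity $\sum_{b\in B}b^k+\sum_{\{b,b'\}}(b+b')^k=0$ relates the power sums $p_k=\sum_{b\in B}b^k$ through relations of the shape $p_k+\tfrac12\sum_{r}\binom kr p_rp_{k-r}-2^{k-1}p_k=0$. This is a Newton-type recursion I would keep in reserve.

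\emph{Auxiliary polynomial.} Let $P(X)=\prod_{a\in A}(X-a)=X\prod_{b\in B}(X-b)$ and $c_i=1/P'(a_i)$. I would take
\[
f(X)=\kappa+\sum_{i=1}^N c_i(X+a_i)^{M+m}(X-a_i)^{m}
\]
with a constant $\kappa$ chosen as in \cref{prop:restricted-sum-sidon-prime}. If needed, I would multiply the summands by a factor $(X-a_i)$ or by $X$ to use $0\in A$ and fix the parity bookkeeping.
\begin{itemize}
\item \emph{Degree.} \cref{lem:lagrange} kills the top $N-1$ coefficients, giving $\deg f\le M+2m-(N-1)=M$ (up to the adjustment).
\item \emph{Vanishing order.} The relation $(a_j+a_i)^M=1$ for $i\ne j$, together with \cref{lem:lagrange}, gives $\E^{(r)}f(a_j)=0$ for $0\le r\le m$, so $f$ has a zero of order $m+1$ at each of the $N$ points.
\item \emph{Forcing $f\equiv0$.} Since $N(m+1)=(2m+1)(m+1)>m(2m+1)=M$, we conclude $f\equiv0$.
\end{itemize}

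\emph{Exploiting $f\equiv0$.} Next I would compute $\E^{(m+1)}f(a_j)$ and $\E^{(m+2)}f(a_j)$ exactly as before. One change: at $a_j=0$ the diagonal denominator $2a_j$ vanishes, so the point $0$ must be treated separately, and $S_1,S_2$ are built from the reflected polynomial $Q(X)=-P(-X)$.
\begin{itemize}
\item For $b\in B$, the two identities should still combine into a relation of the form $2bS_2(b)=\lambda S_1(b)$. This yields a first-order differential equation $2XQ'(X)-\lambda Q(X)\equiv \mu P(X)$ on the roots of $P$.
\item Because $N$ is now odd, the comparison of even and odd parts changes sign roles. I expect it to force $P(X)=X(X^{2m}+\beta)$ or $P(X)=X^{N}+\beta X$, i.e.\ $B$ is a coset of the subgroup of order $2m$ (or of its index-2 subgroup).
\item Alternatively, the diagonal terms $(2b)^M-1$ with $2\notin S_d$ (so $(2b)^M\ne1$) give a nonvanishing quantity. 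This would contradict the identities outright unless $c_j$ satisfies a rigid relation.
\end{itemize}

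\emph{Ruling out the coset.} If $B=\gamma\mu_{2m}$, then $b+b'$ ranging over $S_d$ together with the power-sum relations gives a contradiction for $m\ge2$. For instance, the sum of $x^{2m}$ over $S_d$ must vanish, while a direct computation with roots of unity does not give zero. Small $m$ are excluded by $M\ge4$.

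\emph{Main obstacle.} The delicate step is the Hasse-derivative bookkeeping at the special point $a_j=0$, where $S_1(0),S_2(0)$ involve $c_0/0$. I would first try to avoid this by inserting an extra factor of $X$ into $f$, so that the contribution of $0$ is controlled. Failing that, I would use only the $N-1$ equations at $b\in B$, which still determine $P/X$ as a degree-$2m$ polynomial vanishing on $B$.
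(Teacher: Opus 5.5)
\textbf{There is a genuine gap.} The vanishing-order step is false as stated, and the rest of your argument rests on it.

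Take your $f$, with the factor $(X-a_i)^m$, and a point $b\in B$. The off-diagonal part of $\E^{(m)}f(b)$ is $\sum_{a\ne b}c_aG(a)$, where
\[
G(T)=\sum_{u=0}^{m}\binom{M+m}{u}\binom{m}{m-u}(b+T)^{m-u}(b-T)^{u}.
\]
This $G$ has degree at most $m\le N-2$ and satisfies $G(b)=(2b)^m$. So by \cref{lem:lagrange} the off-diagonal part equals $-c_b(2b)^m$. The diagonal summand $c_b(2b+Y)^{M+m}Y^m$ contributes $c_b(2b)^{M+m}$. Hence
\[
\E^{(m)}f(b)=c_b(2b)^m\bigl((2b)^M-1\bigr)\neq0,
\]
precisely because $2b\notin S_d$.

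So at each point of $B$ your $f$ vanishes to order exactly $m$, not $m+1$, and the count $N(m+1)>M$ cannot be invoked. Your fallback with $(X-a_i)^{m+1}$ does not rescue this:
\begin{itemize}
\item the degree bound weakens to $\deg f\le M+1$;
\item the $r=0$ polynomial $(a_j+T)^m(a_j-T)^{m+1}$ now has degree $N$, so its Lagrange sum is $(-1)^{m+1}\sum_{a\in A}a+(-1)^m a_j$. This depends on $a_j$ and cannot be cancelled by a constant $\kappa$.
\end{itemize}
Everything after this point is only anticipated, not derived: the relation $2bS_2(b)=\lambda S_1(b)$, the predicted shape of $P$, and the power-sum elimination of the coset. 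Incidentally, a coset of the $2m$-th roots of unity is closed under negation, so it would put $0\in B\rsum B\subset S_d$ at once.

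\textbf{The missing idea: the point $0$ is where the extra vanishing occurs.}
\begin{itemize}
\item At $x=0$, the off-diagonal polynomial is a constant multiple of $T^m$. Its full Lagrange sum is $0$ and it vanishes at $T=0$, so the off-diagonal contribution is $0$.
\item The diagonal summand at $0$ is $c_0X^{M+2m}$, which contributes nothing to $\E^{(m)}f(0)$.
\end{itemize}
Hence $\E^{(m)}f(0)=0$, and $X^{m+1}\prod_{b\in B}(X-b)^m$ divides $f$. This divisor has degree $(m+1)+2m^2=M+1>M\ge\deg f$, so $f\equiv0$. Then $\E^{(m)}f(b)=0$ contradicts the display above. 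With correct bookkeeping, your own polynomial finishes the proof in one step. No higher derivatives, differential equations, or coset analysis are needed.

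\textbf{Comparison with the paper.} The paper interpolates only over $B$, using
\[
\Phi(X)=-(-1)^m+\sum_{b\in B}\frac{(X+b)^{M+m-1}(X-b)^m}{Q'(b)}.
\]
Since $b^M=1$ for $b\in B$, it gets a zero of order $2m$ at the origin, so $X^{2m}Q^m\mid\Phi$ forces $\Phi\equiv0$. It then turns $\E^{(m)}\Phi(b)=0$ into $Q(-b)=\alpha bQ'(b)$ and reaches a contradiction by comparing the leading and constant coefficients of $Q(-X)-\alpha XQ'(X)$.
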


\begin{proof}
Assume, for contradiction, that such a set $A$ exists.  Write $A=\{0\}\sqcup B$, $|A|=2m+1$, and $|B|=2m$.
By \cref{prop:restricted-sum-sidon-prime}, the set $A$ is Sidon,
$2A\cap S_d=\varnothing$, and $M=\binom{2m+1}{2}=m(2m+1)$.
In particular $m\ge1$.  Since $0\in A$, we have $B\subset S_d$.  The
Sidon property gives the disjoint partition $S_d=B\sqcup(B\rsum B)$, and also $2B\cap S_d=\varnothing$.
Put
\[
  Q(X)=\prod_{b\in B}(X-b) \qquad \text{and} \qquad c_b=\frac1{Q'(b)}.
\]
Define
\begin{equation}\label{eq:zero-odd-Phi}
  \Phi(X) = -(-1)^m+\sum_{b\in B}c_b(X+b)^{M+m-1}(X-b)^m .
\end{equation}

We first record two consequences of the same argument used in the proof of \cref{prop:restricted-sum-sidon-prime}.  First, $\deg\Phi\le M$.
Indeed, for $0\le \ell\le2m-2=|B|-2$, the coefficient of $X^{M+2m-1-\ell}$ in the sum defining $\Phi$ is a linear combination of $\sum_{b\in B}c_b b^j$ with $ 0\le j\le \ell$, and all these sums vanish by \cref{lem:lagrange}.  Thus the coefficients of $X^{M+2m-1},\ldots,X^{M+1}$ in the sum defining $\Phi$ are all zero. Thus $\deg \Phi\le M+2m-1-(2m-1)=M.$.

Second, we claim that $X^{2m}Q(X)^m\mid \Phi(X)$. 
For $X^{2m}\mid \Phi(X)$, use $b^M=1$ for $b\in B$ and write
\[
  (X+b)^{M+m-1}(X-b)^m = (-1)^m b^{2m-1} \left(1+\frac Xb\right)^{M+m-1}   \left(1-\frac Xb\right)^m .
\]
If $(1+T)^{M+m-1}(1-T)^m=\sum_{r\ge0}h_rT^r$, 
then
\[
  \Phi(X)+(-1)^m = (-1)^m\sum_{r\ge0}h_rX^r \sum_{b\in B}\frac{b^{2m-1-r}}{Q'(b)}.
\]
For $r=0$, we have $h_0=1$, and by \cref{lem:lagrange},
$\sum_{b\in B}\frac{b^{2m-1}}{Q'(b)}=1$.  Thus the $r=0$ is exactly cancelled by the constant term $-(-1)^m$. Also, the terms $1\le r\le2m-1$ vanish by \cref{lem:lagrange}.  Thus $X^{2m}\mid\Phi(X)$.

It remains to show that $Q(X)^m\mid\Phi(X)$.  Fix $b_0\in B$.  
We prove that $\E^{(r)}\Phi(b_0)=0$ for $0\le r\le m-1$.  For $\beta\ne b_0$, we have $b_0+\beta\in B\rsum B\subset S_d$, so $(b_0+\beta)^M=1$.  Thus, for $0\le r\le m-1$,
\[
\E^{(r)}\bigl((X+\beta)^{M+m-1}(X-\beta)^m\bigr)\big|_{X=b_0} =\sum_{i=0}^r
  \binom{M+m-1}{i}\binom{m}{r-i}
  (b_0+\beta)^{m-1-i}(b_0-\beta)^{m-r+i}.
\]
The last expression is a polynomial $R_r(\beta)$ of degree at most $(m-1-i)+(m-r+i)=2m-r-1$.
If $1\le r\le m-1$, then $2m-r-1\le2m-2=|B|-2$.
Thus \cref{lem:lagrange}, applied to $T=B$ and $G=R_r$, gives $\sum_{\beta\in B}R_r(\beta)/Q'(\beta)=0$.
Moreover $R_r(b_0)=0$, since the exponent $m-r+i$ is positive for every $0\le i\le r<m$.
Thus $\sum_{\beta\ne b_0}R_r(\beta)/Q'(\beta)=0$.
This is exactly the total contribution of the off-diagonal terms to  $\E^{(r)}\Phi(b_0)$.  The diagonal summand is divisible by $(X-b_0)^m$, so its $r$-th Hasse derivative at $b_0$ is also zero.
Thus $\E^{(r)}\Phi(b_0)=0$ for $1\le r\le m-1$.

For $r=0$, the polynomial is $R_0(\beta)=(b_0+\beta)^{m-1}(b_0-\beta)^m$.
It has degree $2m-1=|B|-1$ and leading coefficient $(-1)^m$.  Thus, by \cref{lem:lagrange},  $\sum_{\beta\in B}R_0(\beta)/Q'(\beta)=(-1)^m$.
The diagonal term is still zero, and the constant term $-(-1)^m$ in equation~\eqref{eq:zero-odd-Phi} cancels this value.  Thus $\E^{(0)}\Phi(b_0)=\Phi(b_0)=0$.  We have proved $Q(X)^m\mid \Phi(X)$.

\smallskip
Since $X^{2m}$ and $Q(X)$ are coprime, $X^{2m}Q(X)^m\mid\Phi(X)$.
The degree of the divisor is $2m+m|B|=2m+2m^2$, whereas $\deg\Phi\le M=m(2m+1)=2m^2+m$.
For $m\ge1$, the divisor $X^{2m}Q(X)^m$ has degree larger than $M$, so $\Phi\equiv0$.

\smallskip
Now fix $b\in B$, and set $C=\binom{M+m-1}{m}$.
Since $\Phi\equiv0$, we have
$\E^{(m)}\Phi(b)=0$.
We compute this coefficient explicitly, that is, the coefficient of $Y^m$ in $\Phi(b+Y)$.
Recall that $\Phi(X)$ in \cref{eq:zero-odd-Phi}. 
For $\beta\in B$, put $s=b+\beta$ and $t=b-\beta$.
Then, after substituting $X=b+Y$, we have $X+\beta=s+Y$ and $X-\beta=t+Y$.
Thus the coefficient of $Y^m$ in the $\beta$-summand is
\[
  \frac1{Q'(\beta)} \sum_{i=0}^m \binom{M+m-1}{i} \binom{m}{m-i} s^{M+m-1-i}t^i.
\]
We split this sum over $i$ into the range $0\le i\le m-1$ and the last term $i=m$.

First consider the part with $0\le i\le m-1$.  For $\beta\ne b$, we have $s=b+\beta\in B\widehat{+}B\subset S_d$,
and so $s^M=1$.  Thus this part becomes $G(\beta)/Q'(\beta)$, 
where
\[
  G(T)= \sum_{i=0}^{m-1} \binom{M+m-1}{i} \binom{m}{m-i} (b+T)^{m-1-i}(b-T)^i.
\]
The polynomial $G(T)$ has degree at most $m-1$.  Since
$m-1\le 2m-2=|B|-2$, \cref{lem:lagrange}, applied with $T=B$, gives $\sum_{\beta\in B}G(\beta)/Q'(\beta)=0$.

At the diagonal point $\beta=b$, $ G(b)=(2b)^{m-1}$. Thus the
off-diagonal contribution from the range $0\le i\le m-1$ is
\[
  \sum_{\substack{\beta\in B\\ \beta\ne b}} \frac{G(\beta)}{Q'(\beta)} = -\frac{(2b)^{m-1}}{Q'(b)}.
\]
On the other hand, after substituting $X=b+Y$, the diagonal summand is
\[
  \frac{(X+b)^{M+m-1}(X-b)^m}{Q'(b)}=\frac{(2b+Y)^{M+m-1}Y^m} {Q'(b)}.
\]
Thus the coefficient of $Y^m$ is $(2b)^{M+m-1}/Q'(b) = (2b)^{m-1}(2b)^M/Q'(b)$.
Combining this diagonal contribution with the off-diagonal contribution, the total contribution of the range $0\le i\le m-1$ is
\[
  -\frac{(2b)^{m-1}}{Q'(b)} + \frac{(2b)^{m-1}(2b)^M}{Q'(b)} = \frac{(2b)^{m-1}((2b)^M-1)}{Q'(b)}.
\]

It remains to consider the last term $i=m$.  For $\beta\ne b$, using $s^M=1$ and $t=b-\beta=2b-s$, this term is
\[
  \binom{M+m-1}{m}s^{M-1}t^m= C\frac{(2b-s)^m}{s}=C\frac{(2b)^m}{s} +C\sum_{j=1}^m \binom mj (2b)^{m-j}(-s)^{j-1}.
\]
The second summand on the right is a polynomial in $s$, so in $\beta$, of degree at most $m-1$.  By \cref{lem:lagrange}, its contribution to the sum over $\beta\in B$ is zero.  
The diagonal index may be included in this sum, since the reduced expression vanishes at $\beta=b$.
Thus the contribution of the last term $i=m$ is
\[
  C(2b)^m\sum_{\beta\in B}\frac1{Q'(\beta)(b+\beta)} = C(2b)^m\sum_{\beta\in B}\frac{c_\beta}{b+\beta}.
\]
Therefore, combining this with the contribution from the range
$0\le i\le m-1$, we obtain
\[
  \E^{(m)}\Phi(b) = \frac{(2b)^{m-1}((2b)^M-1)}{Q'(b)} + C(2b)^m\sum_{\beta\in B}\frac{c_\beta}{b+\beta}.
\]
Since $\Phi\equiv0$ and   $\sum_{\beta\in B}\frac{c_\beta}{b+\beta}  = -1/Q(-b)$, this gives
\begin{equation}\label{eq:zero-odd-local}
  \frac{(2b)^{m-1}((2b)^M-1)}{Q'(b)} - \frac{C(2b)^m}{Q(-b)}   = 0.
\end{equation}
Since $B\subset S_d$, we have $b^M=1$.  Also $2B\cap S_d=\varnothing$, so $2\notin S_d$, and so $2^M\ne1$.  Thus, $(2b)^M=2^Mb^M=2^M\ne1$.
Combining this with \cref{eq:zero-odd-local}, we get
\[
  Q(-b) = \frac{2bCQ'(b)}{(2b)^M-1} = \frac{2C}{2^M-1}\,bQ'(b).
\]
Set $\alpha=(2C)/(2^M-1)$.
Then, for every $b\in B$, $Q(-b)=\alpha bQ'(b)$.

Define $R(X):=Q(-X)-\alpha XQ'(X)$.
Then $R(b)=0$ for every $b\in B$.
Since $Q(X)=\prod_{b\in B}(X-b)$, it follows that $Q\mid R$.
Moreover, $\deg R\le \deg Q=2m$.
Thus there exists $\lambda\in\Fp$ such that $R=\lambda Q$.

We compare the leading coefficients of $R$ and $Q$.  Since $Q$ is monic of even degree $2m$, the polynomial $Q(-X)$ also has leading coefficient $1$.  Also the leading term of $XQ'(X)$ is $2mX^{2m}$.
Therefore the leading coefficient of $R(X)=Q(-X)-\alpha XQ'(X)$ is $1-2m\alpha$.
Since $R=\lambda Q$ and $Q$ is monic, we get $\lambda=1-2m\alpha$.

On the other hand, evaluating $R=\lambda Q$ at $X=0$ gives $R(0)=\lambda Q(0)$.
But by the definition of $R$, $R(0)=Q(0)$.
Since $0\notin B$, $Q(0)=\prod_{b\in B}(-b)\ne0$.
Thus $\lambda=1$.  Combining this with $\lambda=1-2m\alpha$ gives $2m\alpha=0$.
However, since $2m<p$, we have $2m\ne0$ in $\Fp$.  Also $C\ne0$ in $\Fp$, since $M+m-1<p$, and $2^M-1\ne0$.  Thus $\alpha=(2C)/(2^M-1)\ne0$.
Thus $2m\alpha\ne0$, a contradiction.  This completes the proof.
\end{proof}

\section[The nonzero-odd case]{\texorpdfstring{The odd nonzero case: $0\notin A$ and $|A|$ is odd}{The odd nonzero case}}\label{sec:4}


In this section, we consider the case when the set contains no zero and has odd cardinality. 

Let $|A|=N=2m+1$.  By Proposition~\ref{prop:restricted-sum-sidon-prime}, in this case
\[
  M=\binom N2=mN, \qquad A\text{ is Sidon}, \qquad 2A\cap S_d=\varnothing.
\]
The proof has two steps.  First we show that $A$ must be a multiplicative coset.  Then we rule out that coset for $m\ge2$.

\begin{proposition}\label{prop:odd-nonzero-coset}
Let $d\ge2$, and let $p\equiv1\pmod d$ be a prime with $p\ge4d+1$.
Assume $A\rsum A=S_d$, $0\notin A$, and $|A|=N=2m+1$.  Let
\[
  P(X)=\prod_{a\in A}(X-a).
\]
Then $P(X)=X^N+\beta$ for some $\beta\in\Fp^*$.  Thus $A$ is a multiplicative coset.
\end{proposition}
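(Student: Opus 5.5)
We mimic the argument from the even case of \cref{prop:restricted-sum-sidon-prime}, adjusting the exponents to the odd size $N=2m+1$. By \cref{prop:restricted-sum-sidon-prime}, $A$ is Sidon, $2A\cap S_d=\varnothing$ and $M=mN$. Put $c_i=1/P'(a_i)$. The auxiliary polynomial we would use is
\[
  f(X)=\kappa+\sum_{i=1}^N c_i(X+a_i)^{M+m-1}(X-a_i)^{m+1}.
\]
Here the exponents are chosen so that the off-diagonal reduced polynomial $(a_j+T)^{m-1}(a_j-T)^{m+1}$ has degree $2m=N-1$, and the constant $\kappa=-(-1)^{m+1}$ cancels its Lagrange sum.

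\textbf{Step 1: $f\equiv0$.} As in \cref{prop:restricted-sum-sidon-prime}, \cref{lem:lagrange} kills the top $N-1=2m$ coefficients, so $\deg f\le M+2m-2m=M$. The same computation also gives $\E^{(r)}f(a_j)=0$ for $0\le r\le m$: the reduced polynomial has degree $2m-r\le N-2$ for $r\ge1$, it vanishes at $a_j$, and the diagonal summand is divisible by $(X-a_j)^{m+1}$. Hence $P^{m+1}\mid f$. Since $N(m+1)>mN=M\ge\deg f$, we conclude $f\equiv0$.

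\textbf{Step 2: a relation between $S_1$ and $S_2$.} Since $f\equiv0$, we have $\E^{(m+1)}f(a_j)=\E^{(m+2)}f(a_j)=0$. We expand these coefficients exactly as before. The diagonal term together with the low-$u$ terms give a multiple of $c_j(2a_j)^{m-1}((2a_j)^M-1)$ (with the appropriate power). The top terms, after writing $a_j-a_i=2a_j-(a_j+a_i)$ and removing polynomial parts via \cref{lem:lagrange}, produce $S_1(j)=\sum_i c_i/(a_j+a_i)$ and $S_2(j)=\sum_i c_i/(a_j+a_i)^2$. These are well defined because $0\notin A$, so $2a_j\ne0$. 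Eliminating the $((2a_j)^M-1)$ term between the two equations should give a relation $2a_jS_2(j)=\gamma S_1(j)$ for an explicit constant $\gamma$ depending on $N$ (we expect $\gamma=N+1$ again, or some nearby value). Since $N$ is odd, $P(-X)=-Q(X)$ with $Q$ monic, and the partial fraction identity still gives $S_2/S_1=Q'/Q$ with $Q(X)=-P(-X)$. Thus $H(X)=2XQ'(X)-\gamma Q(X)$ vanishes on $A$. Comparing leading coefficients gives $H=(2N-\gamma)P$, and we need $2N-\gamma\ne0$ in $\F_p$, which holds since $N$ is small compared with $p$.

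\textbf{Step 3: reading off $P$.} Write $P=P_{\rm ev}+P_{\rm odd}$, so that $Q=P_{\rm odd}-P_{\rm ev}$. Separating parity in $2XQ'-\gamma Q=(2N-\gamma)P$ yields first-order equations of the form $XP_{\rm odd}'=NP_{\rm odd}$ and $XP_{\rm ev}'=0\cdot P_{\rm ev}$, the latter for $\gamma=N+1$. Since all degrees are below $p$, these force $P_{\rm odd}=X^N$ and $P_{\rm ev}=\beta$ constant. Hence $P=X^N+\beta$, and $\beta\ne0$ because $0\notin A$. The roots of $X^N+\beta$ form a coset of the order-$N$ subgroup, as claimed.

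\textbf{Main obstacle.} The delicate part is Step 2: the bookkeeping of the two Hasse coefficients with the new exponents $M+m-1$ and $m+1$. In particular we must check that the $(2a_j)^M-1$ terms carry the correct relative factors, so that they eliminate cleanly, and that the resulting $\gamma$ makes the even-part equation force $P_{\rm ev}$ to be constant. If $\gamma$ turns out differently, we would instead tune the exponent pair, e.g.\ $(M+m,m)$, so that the parity-separated equations have the desired solution.
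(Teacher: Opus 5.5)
\textbf{There is a genuine gap in Step 1.} Your $f$ does not vanish to order $m+1$ at the points of $A$; in fact $f\not\equiv 0$. Write $s=a_j+a_i$, $t=a_j-a_i$ and use $s^M=1$. The reduction $s^{M+m-1-u}=s^{m-1-u}$ gives a polynomial only for $u\le m-1$. At $r=m$ the term $u=m$ is $\binom{M+m-1}{m}s^{-1}t^{m+1}$, which is rational. Compute $\E^{(m)}f(a_j)$ honestly:
\begin{itemize}
\item the terms with $u\le m-1$ are polynomials in $a_i$ of degree $m\le N-2$ that vanish at $a_i=a_j$, so they sum to zero;
\item the diagonal summand has no $Y^m$ term;
\item writing $t=2a_j-s$ in the $u=m$ term leaves $(2a_j)^{m+1}/s$ plus a polynomial of degree at most $m$.
\end{itemize}
Hence
\[
\E^{(m)}f(a_j)=\binom{M+m-1}{m}(2a_j)^{m+1}S_1(j)=-\binom{M+m-1}{m}\frac{(2a_j)^{m+1}}{P(-a_j)},
\]
which is nonzero, since $-a_j\notin A$ and $M+m-1<p$. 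So $f$ vanishes to order exactly $m$ at each $a_j$. Since $\deg f\le M=\deg P^m$, you only get $f=\lambda P^m$ with $\lambda\neq 0$. The identities $\E^{(m+1)}f(a_j)=\E^{(m+2)}f(a_j)=0$ on which your Step 2 rests are false.

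In the even case the pair $(M+m,m+1)$ works because the reduced exponent $m-u$ stays nonnegative up to $u=m$. Lowering the first exponent by one costs exactly one order of vanishing. Your fallback $(M+m,m)$ does not help either: it is the paper's $\widetilde F$, which also only satisfies $\widetilde F=\lambda P^m$. That is useful later, once $P$ is known, but it cannot determine $P$. Within this family, the Lagrange argument certifies multiplicity $\mu$ only if the first exponent is at least $M+\mu-1$ and the reduced $r=0$ polynomial has degree at most $N-1=2m$. For odd $N$ this caps $\mu$ at $m$, so the gain must come from the degree.

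\textbf{The missing idea is to win on the degree, as the paper does.} Take
\[
G(X)=\sum_{a\in A}c_a(X+a)^{M+m-1}(X-a)^m,
\]
with no constant term. The summands have degree $M+2m-1$ and the top $2m$ coefficients vanish, so $\deg G\le M-1$. The $r=0$ reduced polynomial $(b+T)^{m-1}(b-T)^m$ has degree $2m-1=N-2$, so its Lagrange sum is already zero. Thus $P^m\mid G$ with $\deg P^m=M>M-1$, which forces $G\equiv 0$.

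One then uses $\E^{(m)}G(b)=\E^{(m+1)}G(b)=0$. Eliminating $(2b)^M-1$ gives $2bS_2(b)=NS_1(b)$, that is, $2bP'(-b)+NP(-b)=0$. In your notation this is $\gamma=N$, not $N+1$. With $Q(X)=-P(-X)$, separating parity in $2XQ'-\gamma Q=(2N-\gamma)P$ gives
\[
XP_{\rm odd}'=NP_{\rm odd},\qquad XP_{\rm ev}'=(\gamma-N)P_{\rm ev}.
\]
With $\gamma=N+1$ you would get $XP_{\rm ev}'=P_{\rm ev}$, forcing $P_{\rm ev}=0$ rather than a constant. With $\gamma=N$ your Step 3 goes through as written.
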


\begin{proof}
Put $c_a=1/P'(a)$ and define
\begin{equation}\label{eq:G-shifted}
  G(X)=\sum_{a\in A}c_a(X+a)^{M+m-1}(X-a)^m.
\end{equation}

We first bound the degree of $G$. 
The coefficient of $X^{M+2m-1-\ell}$ in $G$ is a linear combination of
$\sum_{a\in A}c_a a^j$  with $0\le j\le\ell$.
For $0\le\ell\le2m-1=N-2$, all these sums vanish by equation~\eqref{eq:lagrange-identities}.  Thus $\deg G\le M+2m-1-2m=M-1$.

Note that by the same argument as in the proofs of the preceding propositions, one obtains $P(X)^m\mid G(X)$.
We omit the repeated details. 
Since $\deg P^m=mN=M$ and $\deg G\le M-1$, we get $G\equiv0$.

Fix $b\in A$ and put  
\[
  S_1(b)=\sum_{a\in A}\frac{c_a}{b+a} \qquad \text{and} \qquad S_2(b)=\sum_{a\in A}\frac{c_a}{(b+a)^2}.
\]
If $a\ne b$, then $a+b\in S_d\subset\Fp^*$, while if $a=b$, then $2b\ne0$.
Thus all denominators are nonzero.
Set $C=\binom{M+m-1}{m}$ and $D=\binom{M+m-1}{m+1}$.
Since $G\equiv0$, both $\E^{(m)}G(b)$ and $\E^{(m+1)}G(b)$ vanish.

For $\E^{(m)}G(b)$, the terms $0\le i\le m-1$ contribute $(2b)^{m-1}((2b)^M-1)/P'(b)$, as in the proof of \cref{prop:zero-odd}.  The term $i=m$ is
\[
  C\frac{(2b-s)^m}{s} =C\left(\frac{(2b)^m}{s}+\sum_{j=1}^m\binom mj(2b)^{m-j}(-s)^{j-1}\right),
\]
so its polynomial part cancels by \cref{lem:lagrange}, and its rational part contributes $C(2b)^mS_1(b)$.  Since $G\equiv0$, we have $\E^{(m)}G(b)=0$, and therefore
\begin{equation}\label{eq:G-local-m}
  0=\frac{(2b)^{m-1}((2b)^M-1)}{P'(b)}+C(2b)^mS_1(b).
\end{equation}

For $\E^{(m+1)}G(b)$, the polynomial terms with $1\le i\le m-1$ together with the diagonal term give
\[
  \frac{(M+m-1)(2b)^{m-2}((2b)^M-1)}{P'(b)}.
\]
The remaining non-polynomial terms come from $i=m$ and $i=m+1$, which are
\[
  mC\frac{(2b-s)^{m-1}}s=mC\frac{(2b)^{m-1}}s+g(s) \quad \text{and} \quad 
  D\frac{(2b-s)^m}{s^2}=D\frac{(2b)^m}{s^2}-D\frac{m(2b)^{m-1}}s+h(s),
\]
for some polynomial $g,h$ in $s$, and cancelling the polynomial parts by \cref{lem:lagrange}, their total contribution is $m(C-D)(2b)^{m-1}S_1(b)+D(2b)^mS_2(b)$.
Since $G\equiv0$, we have $\E^{(m+1)}G(b)=0$, and thus
\begin{equation}\label{eq:G-local-m+1}
  0=\frac{(M+m-1)(2b)^{m-2}((2b)^M-1)}{P'(b)}
  +m(C-D)(2b)^{m-1}S_1(b)+D(2b)^mS_2(b).
\end{equation}

Eliminating $(2b)^M-1$ from equations~\eqref{eq:G-local-m} and~\eqref{eq:G-local-m+1} gives $2bDS_2(b)=\bigl(C(M-1)+mD\bigr)S_1(b)$.
Since $D=C(M-1)/(m+1)$,
this is reduced to $2bS_2(b)=NS_1(b)$.
By \cref{lem:lagrange}, $\sum_{a\in A}c_a/(X+a)=-1/P(-X)$.
Thus
\[
  S_1(b)=-\frac1{P(-b)} \qquad \text{and} \qquad  S_2(b)=\frac{P'(-b)}{P(-b)^2}.
\]
Substituting these into the equation $2bS_2(b)=NS_1(b)$ gives $2bP'(-b)+NP(-b)=0$ for all $b\in A$.
Thus the polynomial $H(X)=2XP'(-X)+NP(-X)$ vanishes at every root of $P$.  It has leading coefficient $N$, since $N$ is odd and $P$ is monic of degree $N$.  Thus $H(X)=NP(X)$.
Equivalently,
\begin{equation}\label{eq:odd-differential-equation}
  2XP'(-X)+NP(-X)=NP(X).
\end{equation}
Write $P=P_{\rm ev}+P_{\rm odd}$ for the even and odd parts.  Since $N$ is odd,
\[
  P(-X)=P_{\rm ev}(X)-P_{\rm odd}(X) \qquad \text{and} \qquad P'(-X)=-P'_{\rm ev}(X)+P'_{\rm odd}(X).
\]
Substitution into equation~\eqref{eq:odd-differential-equation} gives $-XP'_{\rm ev}(X)+XP'_{\rm odd}(X)=NP_{\rm odd}(X)$.
The even and odd parts therefore satisfy $XP'_{\rm ev}(X)=0$ and $XP'_{\rm odd}(X)=NP_{\rm odd}(X)$.
This, together with $\deg P=N<p$, implies that $P_{\rm ev}$ is constant and $P_{\rm odd}$ is a scalar multiple of $X^N$.  As $P$ is monic of odd degree $N$,
$P(X)=X^N+\beta$.
Since $0\notin A$, $\beta=P(0)\ne0$.  Therefore the roots of $P$ form a multiplicative coset.
\end{proof}

\begin{proposition}\label{prop:odd-coset-contradiction}
Let $d\ge2$, and let $p\equiv1\pmod d$ be a prime with $p\ge4d+1$.  There is no decomposition $A\rsum A=S_d$ with $0\notin A$, $|A|=2m+1$, and $m\ge2$.
\end{proposition}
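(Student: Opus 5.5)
The plan is to turn the coset structure from \cref{prop:odd-nonzero-coset} into explicit numerical identities in $\Fp$, and to show that these identities cannot all hold once $m\ge2$.

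\emph{Setting up the coset.} By \cref{prop:odd-nonzero-coset}, $P(X)=X^N+\beta$ with $\beta\ne0$. Hence $N\mid p-1$ and $A=gK$, where $K=\{\zeta^i:0\le i<N\}$ is the subgroup of order $N$ and $g^N=-\beta$. Then
\[
A\rsum A=g\cdot\{\zeta^i+\zeta^j:i\ne j\}=gK\cdot\{1+\zeta^k:1\le k\le m\},
\]
using $1+\zeta^{-k}=\zeta^{-k}(1+\zeta^k)$. In particular $S_d$ is stable under multiplication by $K$, so $K\le S_d$. By \cref{prop:restricted-sum-sidon-prime} we also have $M=mN$, $A$ is Sidon, and $2g\notin S_d$.

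\emph{First source of constraints: power sums of $S_d$.} For $1\le k<M$ we have $\sum_{s\in S_d}s^k=0$. Take $k=tN$ and write the restricted sum over unordered pairs as half of (full double sum minus diagonal). Expanding $(\zeta^i+\zeta^j)^{tN}$ binomially, only exponents that are multiples of $N$ survive. This gives, for every $1\le t\le m-1$,
\[
N\sum_{r=0}^{t}\binom{tN}{rN}=2^{tN}\quad\text{in }\Fp .
\]
For $t=1$ this reads $2^{N-1}=N$, and it is available exactly because $m\ge2$. For $k=M$ we use $\sum_{s\in S_d}s^M=M$ in the same way, which brings in $g^M=(-\beta)^m$.

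\emph{Second source of constraints: the local identities.} I would plug $P=X^N+\beta$ into \eqref{eq:G-local-m}. Here $P'(b)=Nb^{N-1}$ and $P(-b)=2\beta$, so $S_1(b)=-1/(2\beta)$. This yields $(2b)^M=1-CN$ with $C=\binom{M+m-1}{m}$, i.e.
\[
2^M(-\beta)^m=1-CN .
\]
Next I would do the same with \eqref{eq:G-local-m+1}, where $S_2(b)=P'(-b)/P(-b)^2$ is explicit, and, if needed, with the next Hasse coefficient $\E^{(m+2)}G(b)$, which must also vanish because $G\equiv0$.

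\emph{Closing step and main obstacle.} The final step is to combine these relations: the power-sum identities in $2^N$ and $N$, the local identity in $2^M$ and $\beta^m$, the condition $(g(1+\zeta))^M=1$, and $2^M g^M\ne1$ (from $2g\notin S_d$). The aim is to force either $2m=0$ or $N-1=0$ in $\Fp$, or $2^M g^M=1$; each is impossible, since $N\le M<p$. The main obstacle is that these are congruences in $\Fp$, not in $\mathbb Z$, so some individual relations (e.g.\ $2^{N-1}=N$) may hold for particular $p$. The contradiction has to come from a genuinely algebraic incompatibility.

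What I would try first: compare the $k=M$ power-sum identity with $2^M(-\beta)^m=1-CN$. Both express $2^Mg^M$ through binomial sums in $M$ and $N$. Using $t=1$ and $t=m-1$ to simplify those sums, I expect to reach $(2g)^M=1$. That contradicts $2A\cap S_d=\varnothing$ from \cref{prop:restricted-sum-sidon-prime}.

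If that comparison does not close, the fallback is a coefficient comparison. Evaluate the Hasse-derivative identity $\E^{(m+2)}G(b)=0$ with the explicit $P$; it becomes a polynomial relation in $b^N=-\beta$ with coefficients that are binomial numbers in $M$ and $m$. Reading off a nonvanishing coefficient, using $M+m<p$, should then give the contradiction.
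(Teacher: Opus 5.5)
\textbf{There is a genuine gap: the proposal stops where the proof actually starts.} The decisive step is only announced (``I expect'', ``should then give''), and the route you would try first cannot supply it.

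The $k=M$ power-sum identity is vacuous. Each of the $M$ restricted sums lies in $S_d$, so $\sum_{\{x,y\}}(x+y)^M=M$ holds automatically. In your notation, $g(1+\zeta^j)\in S_d$ for $j\ne0$ gives $\sum_{j}(1+\zeta^j)^M=2^M+(N-1)g^{-M}$, and $g^M$ cancels. Moreover, given $(2g)^M=1-CN$, ``reaching $(2g)^M=1$'' is literally the statement $CN=0$, i.e.\ the contradiction itself. The identities with $1\le t\le m-1$ are genuine, but as you note they only say that $p$ divides specific integers such as $2^{N-1}-N$, which cannot be excluded by size. Likewise \eqref{eq:G-local-m+1} is redundant once $P=X^N+\beta$. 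With $2bS_1(b)/c_b=N$ and $(2b)^2S_2(b)/c_b=N^2$ (where $c_b=1/P'(b)$), it reduces to the binomial identity $(m+1)\binom{M+m-1}{m+1}=(M-1)\binom{M+m-1}{m}$. This is forced, since it is exactly the relation that produced the coset.

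Your fallback, the vanishing of $\E^{(m+2)}G(b)$, is the right mechanism and does close the argument. But it is the entire content of the proposition, you have not carried it out, and it is not a polynomial relation whose coefficient one reads off. After dividing by $c_b(2b)^{m-3}$, all dependence on $b$ disappears. What remains is to compute a single number and show it is nonzero in $\Fp$. That number must vanish at $m=1$, because $A=-S_d$ with $M=3$ satisfies every identity involved. So no crude size or leading-term argument can replace the exact evaluation.

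Carrying it out: with $S_3(b)=\sum_{a}c_a(b+a)^{-3}$ one finds $(2b)^3S_3(b)/c_b=N^2$. Put $K=M+m-1$, $C=\binom Km$, $D=\binom K{m+1}$, $E=\binom K{m+2}$, and use $(2b)^M-1=-CN$. The identity becomes
\[
N\Bigl[-\tbinom K2C+\tbinom m2(C-2D+E)+Nm(D-E)+NE\Bigr]=-\frac{4m(m-1)(m+1)^2(2m-1)}{m+2}\,CN=0 .
\]
This is impossible for $m\ge2$, because all the factors are nonzero integers below $p$.

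The paper runs the same three-coefficient mechanism on a different polynomial. It uses $\widetilde F$ with exponent $M+m$, which is not identically zero but equals $\lambda P^m$. Comparing the $Y^m$, $Y^{m+1}$, $Y^{m+2}$ coefficients of $\widetilde F(b+Y)=\lambda P(b+Y)^m$ gives $C_1N\,\frac{m(m-1)(2m-1)}{6(m+2)}=0$. Working with $G\equiv0$, as you suggest, would spare you $\lambda$ and the expansion of $P(b+Y)^m$. Until this computation is written down, however, the proposal is a plan rather than a proof.
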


\begin{proof}
Let $N=|A|$. By \cref{prop:odd-nonzero-coset}, write $A=\alpha H$, where $H\le\Fps$ has order $N=2m+1$.  Then
\[
  P(X)=\prod_{a\in A}(X-a)=X^N-\alpha^N,
\]
and $M=mN$.  For every $a\in A$, $(2a)^M$ is independent of $a$ since $a^M=(\alpha h)^M=\alpha^M(h^N)^m=\alpha^M$.  Moreover $(2a)^M\ne1$ since $2A\cap S_d=\varnothing$.

Define the odd auxiliary polynomial
\begin{equation}\label{eq:corrected-odd-F}
  \widetilde F(X)=-(-1)^m + \sum_{a\in A}\frac{(X+a)^{M+m}(X-a)^m}{P'(a)}.
\end{equation}
We first show that $P(X)^m\mid\widetilde F(X)$.  Consider the coefficient
of $X^{M+2m-\ell}$ in the sum in \eqref{eq:corrected-odd-F}.  It is a scalar multiple of $\sum_{a\in A}\frac{a^\ell}{P'(a)}$.
For $0\le\ell\le2m-1=N-2$, this sum is zero by \cref{lem:lagrange}
applied to $T=A$.  Thus the top $2m$ coefficients vanish.
Thus $\deg\widetilde F\le M+2m-2m=M$.
Fix $b\in A$ and $0\le r\le m-1$.  For $a\ne b$, after writing $s=b+a$ and using $s^M=1$, the $r$-th Hasse derivative of the $a$-summand is a polynomial in $a$ of degree at most $2m-r$.  For $1\le r\le m-1$ this degree is at most $N-2$, and the Lagrange sum vanishes; the diagonal summand is divisible by $(X-b)^m$.  For $r=0$, the reduced polynomial $(b+a)^m(b-a)^m$ has degree $N-1$ and leading coefficient $(-1)^m$, while the diagonal term is zero; the constant $-(-1)^m$ cancels this value.  Thus $P(X)^m\mid\widetilde F(X)$.  

Since $\deg P^m=M$, there is $\lambda\in\Fp$ such that
\begin{equation}\label{eq:corrected-F-factorization}
  \widetilde F(X)=\lambda P(X)^m.
\end{equation}

Fix $b\in A$.  Let $C_1=\binom{M+m}{m+1}$ and $C_2=\binom{M+m}{m+2}$.
Also set
\[
  S_1(b)=\sum_{a\in A}\frac1{P'(a)(b+a)} \qquad \text{and }\qquad S_2(b)=\sum_{a\in A}\frac1{P'(a)(b+a)^2}.
\]
Since $P(X)=X^N-\alpha^N$, we have $P'(b)=Nb^{N-1}$ and $P(-b)=-2b^N$,
and so
\begin{equation}\label{eq:odd-coset-S-values}
  2bS_1(b)P'(b)=N \qquad \text{and} \qquad (2b)^2S_2(b)P'(b)=N^2.
\end{equation}

The coefficient of $Y^m$ in equation~\eqref{eq:corrected-F-factorization} at $X=b+Y$ is
\[
  \frac{(2b)^m((2a)^M-1)}{P'(b)}=\lambda P'(b)^m.
\]
Equivalently,
\begin{equation}\label{eq:odd-coeff-m}
  \lambda P'(b)^{m+1}=(2b)^m((2a)^M-1).
\end{equation}
We compare the coefficients of $Y^{m+1}$ in \cref{eq:corrected-F-factorization}.
As in the computation of $\E^{(m)}G(b)$ in Proposition~\ref{prop:odd-nonzero-coset}, the coefficient of $Y^{m+1}$ in $\widetilde F(b+Y)$ is
\[
  \frac{(M+m)(2b)^{m-1}((2a)^M-1)}{P'(b)}+C_1(2b)^mS_1(b).
\]
We now compute the coefficient of $Y^{m+1}$ in $\lambda P(b+Y)^m$.  Since $P(X)=X^N-\alpha^N$ and $P(b)=0$, we have
\[
  P(b+Y) = (b+Y)^N-b^N = Nb^{N-1}Y+\binom N2 b^{N-2}Y^2+O(Y^3).
\]
Also $P'(b)=Nb^{N-1}$.  Thus
\[
  \frac{P(b+Y)}{P'(b)Y}
  = 1+\frac{\binom N2 b^{N-2}}{Nb^{N-1}}Y+O(Y^2)
  = 1+\frac{N-1}{2b}Y+O(Y^2)
  = 1+\frac mbY+O(Y^2).
\]
Thus the coefficient of $Y$ in $1+\frac{m}{b}Y+O(Y^2)$ to the $m$-th power is $m^2/b$.  Using equation~\eqref{eq:odd-coeff-m}, the coefficient comparison gives
\[
  \frac{(M+m)(2b)^{m-1}((2a)^M-1)}{P'(b)}+C_1(2b)^mS_1(b) = \frac{(2b)^m((2a)^M-1)}{P'(b)}\frac{m^2}{b}.
\]
Multiplying by $P'(b)/(2b)^{m-1}$ and using \cref{eq:odd-coset-S-values}, we get $(M+m)((2a)^M-1)+NC_1=2m^2((2a)^M-1)$.
Since $M+m=m(2m+1)+m=2m(m+1)$, this becomes
\begin{equation}\label{eq:odd-coeff-m+1}
  NC_1=-2m((2a)^M-1).
\end{equation}

We now compare the coefficients of $Y^{m+2}$ in $\widetilde F(b+Y)=\lambda P(b+Y)^m$.
First consider the left-hand side $\widetilde F(b+Y)$.  As in the previous
coefficient computation, the polynomial part cancels by
Lemma~\ref{lem:lagrange}, and only the diagonal correction and the rational
terms remain.
The diagonal correction is $\binom{M+m}2(2b)^{m-2}((2a)^M-1)/P'(b)$.

The non-polynomial terms come from the two largest values of the index, which are
\[
  mC_1\frac{(2b-s)^{m-1}}s = mC_1\frac{(2b)^{m-1}}s+g(s)
\quad \text{and} \quad C_2\frac{(2b-s)^m}{s^2} = C_2\frac{(2b)^m}{s^2} - mC_2\frac{(2b)^{m-1}}s +h(s)
\]
for some polynomials $g,h$ in $s$. 
The polynomial parts vanish after summing over $a\in A$, by \cref{lem:lagrange}.  Thus the surviving rational contribution is $  mC_1(2b)^{m-1}S_1(b) +  C_2(2b)^mS_2(b) - mC_2(2b)^{m-1}S_1(b)$.
Equivalently, $m(C_1-C_2)(2b)^{m- 1}S_1(b)+C_2(2b)^mS_2(b)$.
Therefore the coefficient of $Y^{m+2}$ in $\widetilde F(b+Y)$ is
\[
  \frac{\binom{M+m}{2}(2b)^{m-2}((2a)^M-1)}{P'(b)} +m(C_1-C_2)(2b)^{m-1}S_1(b)+C_2(2b)^mS_2(b).
\]

We next compute the coefficient of $Y^{m+2}$ on the right-hand side
$\lambda P(b+Y)^m$.  Since $P(b)=0$, we write
\[
  P(b+Y)=P'(b)Y \left(\frac{P(b+Y)}{P'(b)Y}\right).
\]
Then
\[
  \lambda P(b+Y)^m   =  \lambda P'(b)^mY^m \left(\frac{P(b+Y)}{P'(b)Y}\right)^m.
\]
Thus the coefficient of $Y^{m+2}$ in $\lambda P(b+Y)^m$ is $\lambda P'(b)^m[Y^2]\left(P(b+Y)/(P'(b)Y)\right)^m$.
Since $P(X)=X^N-\alpha^N$ and $P(b)=0$, we have
\[
  P(b+Y) = (b+Y)^N-b^N = Nb^{N-1}Y+\binom N2b^{N-2}Y^2+\binom N3b^{N-3}Y^3+O(Y^4).
\]
Also $P'(b)=Nb^{N-1}$.  Thus
\begin{align}\label{eq:expY}
  \frac{P(b+Y)}{P'(b)Y} =  1+\frac{N-1}{2b}Y +\frac{(N-1)(N-2)}{6b^2}Y^2 + O(Y^3).
\end{align}
Since $N=2m+1$, the coefficient of $Y$ in this expansion is $(N-1)/(2b)=m/b$.

Now write
\[
  \left(\frac{P(b+Y)}{P'(b)Y}\right)^m = 1+\frac{m^2}{b}Y+\frac{B_2}{b^2}Y^2+O(Y^3).
\]
The coefficient $B_2$ is obtained as follows.  If we raise the right-hand side of equation \cref{eq:expY} to the $m$-th power, then the coefficient of $Y^2$ is 
\[
  m\frac{(N-1)(N-2)}{6b^2}+\binom{m}{2}\left(\frac{N-1}{2b}\right)^2.
\]
Thus
\[
  \frac{B_2}{b^2} = m\frac{(N-1)(N-2)}{6b^2} + \binom m2\left(\frac{N-1}{2b}\right)^2.
\]
Equivalently,
\begin{equation}\label{eq:B2-odd}
  B_2 = m\frac{(N-1)(N-2)}6 + \binom m2\left(\frac{N-1}{2}\right)^2  = \frac{m^2(2m-1)}3+\binom m2m^2.
\end{equation}
Using \eqref{eq:odd-coeff-m}, 
we have $\lambda P'(b)^m = (2b)^m((2a)^M-1)/P'(b)$.
Therefore the coefficient of $Y^{m+2}$ on the right-hand side is
\[
  \frac{(2b)^m((2a)^M-1)}{P'(b)}\frac{B_2}{b^2} = \frac{4B_2(2b)^{m-2}((2a)^M-1)}{P'(b)}.
\]

Equating the two coefficients of $Y^{m+2}$ gives
\[
  \frac{\binom{M+m}{2}(2b)^{m-2}((2a)^M-1)}{P'(b)} + m(C_1-C_2)(2b)^{m-1}S_1(b) + C_2(2b)^mS_2(b) = \frac{4B_2(2b)^{m-2}((2a)^M-1)}{P'(b)}.
\]
Multiplying both sides by $P'(b)/(2b)^{m-2}$, we get
\[
  \binom{M+m}{2}((2a)^M-1) + m(C_1-C_2)(2b)S_1(b)P'(b) + C_2(2b)^2S_2(b)P'(b) =  4B_2((2a)^M-1).
\]
By \cref{eq:odd-coset-S-values}, we have $ (2b)S_1(b)P'(b)=N$ and $(2b)^2S_2(b)P'(b)=N^2$.
Thus
\[
  \binom{M+m}2((2a)^M-1) + mN(C_1-C_2) + N^2C_2 = 4B_2((2a)^M-1).
\]
Using $C_2=C_1\frac{M-1}{m+2}$, $N=2m+1$, and using equation~\eqref{eq:odd-coeff-m+1} to substitute $(2a)^M-1=-NC_1/(2m)$, we obtain
\[
  0=C_1N\left[-\frac1{2m}\left(\binom{M+m}{2}-4B_2\right)+m+\frac{(m+1)(M-1)}{m+2}\right].
\]
With $N=2m+1$, $M=m(2m+1)$, $M+m=2m(m+1)$, and $B_2$ as in \cref{eq:B2-odd}, the bracket equals $m(m-1)(2m-1)/(6(m+2))$.
Thus
\[
  0=C_1N\frac{m(m-1)(2m-1)}{6(m+2)}.
\]
For $m\ge2$, each factor is nonzero in $\Fp$. Indeed, the binomial coefficient $C_1$ is nonzero since $M+m<p$, and the remaining integer factors have absolute value less than $p$.  This gives a contradiction.
\end{proof}

\section[The zero-even case]{\texorpdfstring{The zero-even case: $0\in A$ and $|A|$ is even}{The zero-even case}}\label{sec:5}

In this section, we handle the case when the set contains zero and has even cardinality. 


\begin{proposition}\label{prop:zero-even-root-coset}
Let $d\ge2$, and let $p\equiv1\pmod d$ be a prime with $p\ge4d+1$.
Assume $0\in A$, $A\widehat{+}A=S_d$, and $N=|A|$ is even. Put
$B=A\setminus\{0\}$ with $|B|=N-1$.
Then there exists $\gamma\in S_d$ and a multiplicative subgroup
$H\le S_d$ of order $N-1$ such that $B=\gamma H$.
\end{proposition}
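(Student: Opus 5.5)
The plan is to run the argument of \cref{prop:odd-nonzero-coset} with $B$ in place of $A$, adding the extra divisibility by a power of $X$ that comes from $0\in A$, as in \cref{prop:zero-odd}. Write $N=2m+2$, so $|B|=2m+1$ with $m\ge1$ (the case $N=2$ is excluded since $M\ge4$). By \cref{prop:restricted-sum-sidon-prime}, $A$ is Sidon, $2A\cap S_d=\varnothing$ and $M=\binom{N}{2}=(m+1)(2m+1)$. Since $0\in A$, we get $B=\{0\rsum b\}\subset S_d$, so $b^M=1$ for every $b\in B$. Moreover $b+\beta\in S_d$ for distinct $b,\beta\in B$, and $2b\ne0$. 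Put $Q(X)=\prod_{b\in B}(X-b)$ and $c_b=1/Q'(b)$. Consider
\[
  G(X)=\sum_{b\in B}c_b(X+b)^{M+m-1}(X-b)^m .
\]

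First I will show $G\equiv0$ by a degree count.

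The coefficient of $X^{M+2m-1-\ell}$ in $G$ is a combination of $\sum_b c_b b^j$ with $j\le\ell$. These sums vanish for $\ell\le 2m-1=|B|-2$ by \cref{lem:lagrange}, so $\deg G\le M-1$.

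Next, using $b^M=1$, expand as in \cref{prop:zero-odd}:
\[
(X+b)^{M+m-1}(X-b)^m=(-1)^m b^{2m-1}(1+X/b)^{M+m-1}(1-X/b)^m .
\]
The coefficient of $X^r$ is then a multiple of $\sum_b c_b b^{2m-1-r}$. This vanishes for $0\le r\le 2m-1$, because $0\le 2m-1-r\le|B|-2$, so $X^{2m}\mid G$. No constant correction is needed here, unlike in \cref{prop:zero-odd}.

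Finally, fix $b_0\in B$ and $0\le r\le m-1$. The off-diagonal contributions to $\E^{(r)}G(b_0)$ reduce, via $(b_0+\beta)^M=1$, to a polynomial in $\beta$ of degree at most $2m-1-r\le|B|-2$. This polynomial vanishes at $\beta=b_0$, and the diagonal summand is divisible by $(X-b_0)^m$. Hence $Q^m\mid G$.

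Since $X$ and $Q$ are coprime, $X^{2m}Q^m\mid G$. This divisor has degree $2m+m(2m+1)=2m^2+3m=M$, which exceeds $\deg G\le M-1$, so $G\equiv0$.

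Next I will extract local identities. Fix $b\in B$ and set $S_1(b)=\sum_\beta c_\beta/(b+\beta)$ and $S_2(b)=\sum_\beta c_\beta/(b+\beta)^2$; all denominators are nonzero. I compute $\E^{(m)}G(b)$ and $\E^{(m+1)}G(b)$ exactly as in the proof of \cref{prop:odd-nonzero-coset}: the polynomial parts cancel by \cref{lem:lagrange}, leaving a diagonal term proportional to $(2b)^M-1$ plus the rational terms. This gives the analogues of \eqref{eq:G-local-m} and \eqref{eq:G-local-m+1}, with $C=\binom{M+m-1}{m}$ and $D=\binom{M+m-1}{m+1}=C(M-1)/(m+1)$.

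Eliminating $(2b)^M-1$ gives $2bDS_2(b)=(C(M-1)+mD)S_1(b)=(2m+1)DS_1(b)$. The factor $2m+1$ here comes out regardless of the value of $M$, and we never need $(2b)^M\ne1$. We do need $D\ne0$ in $\F_p$, which holds since $M+m-1<p$; this uses $p=dM+1\ge 2M+1$.

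Then, by \cref{lem:lagrange}, $S_1(b)=-1/Q(-b)$ and $S_2(b)=Q'(-b)/Q(-b)^2$. Substituting gives $2bQ'(-b)+(2m+1)Q(-b)=0$ for all $b\in B$. Since $|B|=2m+1$ is odd, the polynomial $2XQ'(-X)+(2m+1)Q(-X)$ has degree $2m+1$ and leading coefficient $2m+1\ne0$, and it vanishes on $B$. It therefore equals $(2m+1)Q(X)$. The even/odd splitting used for \eqref{eq:odd-differential-equation} then yields $Q(X)=X^{2m+1}+\beta_0$, with $\beta_0\ne0$ because $0\notin B$.

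Consequently $B$ is the set of roots of $X^{N-1}=-\beta_0$, which is a coset $\gamma H$ of the subgroup $H\le\F_p^*$ of order $N-1$. Take $\gamma\in B\subset S_d$. Then $H=\gamma^{-1}B\subset S_d$, so $H\le S_d$, as claimed.

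I expect the main obstacle to be the bookkeeping in the $\E^{(m+1)}$ coefficient. In particular, the case $m=1$, where the range $1\le i\le m-1$ is empty, must be checked separately. One must also confirm that the degree of the divisor $X^{2m}Q^m$ really exceeds $\deg G$ with no slack; this is the reason for the choice of exponents $M+m-1$ and $m$.
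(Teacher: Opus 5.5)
There is a genuine gap at the step ``$G\equiv0$'': your degree arithmetic is off by one. With $N=2m+2$ you have $M=\binom{N}{2}=(m+1)(2m+1)=2m^2+3m+1$. So the divisor $X^{2m}Q(X)^m$ has degree $2m+m(2m+1)=2m^2+3m=M-1$, not $M$, and this is exactly your bound $\deg G\le M-1$.

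The divisibilities $X^{2m}\mid G$ and $Q^m\mid G$, which you prove correctly, therefore only give $G=\lambda X^{2m}Q(X)^m$. Here $\lambda$ is the coefficient of $X^{M-1}$, namely $\lambda=[T^{2m}](1+T)^{M-1}(1-T^2)^m$, using $\sum_b c_b b^{2m}=1$, and this is typically nonzero. A concrete check: take $p=13$, $d=2$, $A=\{0,1,3,9\}$, a genuine decomposition with $m=1$, $Q=X^3-1$ and $c_b=b/3$. One computes $G(X)=9X^2(X^3-1)\ne0$ and $\E^{(1)}G(b)=b\ne0$.

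Consequently your two local identities acquire the nonzero right-hand sides $\lambda b^{2m}Q'(b)^m$ and $\lambda\,[Y^{m+1}]\,(b+Y)^{2m}Q(b+Y)^m$. These involve $Q'(b)^m$ and $Q''(b)$, so eliminating $(2b)^M-1$ no longer yields $2bS_2(b)=(2m+1)S_1(b)$. You therefore do not get a first-order linear differential equation for $Q$.

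The missing idea is to keep $0$ as an interpolation node instead of discarding it. The paper works with $P(X)=\prod_{a\in A}(X-a)$ and the even auxiliary polynomial $f$ of \cref{prop:restricted-sum-sidon-prime}, with exponents $M+m$ and $m+1$.

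\begin{itemize}
\item The multiplicity argument there applies at every point of $A$, including $0$, since the off-diagonal sums $0+u=u$ lie in $B\subset S_d$. So $f$ vanishes to total order $N(m+1)=2(m+1)^2>M\ge\deg f$, forcing $f\equiv0$ with room to spare.
\item One then runs the $\E^{(m+1)},\E^{(m+2)}$ comparison only at points $a\in B$, where all denominators $a+u$ are nonzero. This gives $2aQ_0'(a)=(N+1)Q_0(a)$ with $Q_0(X)=P(-X)$.
\item The polynomial $2XQ_0'(X)-(N+1)Q_0(X)$ also vanishes at $0$ automatically, because $P(0)=0$. Hence it equals $(N-1)P(X)$, and the even/odd splitting gives $P(X)=X(X^{N-1}+\beta)$.
\end{itemize}

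Your final step, identifying $B$ as a coset of the $(N-1)$-st roots of unity contained in $S_d$, is then fine.
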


\begin{proof}
Since $N$ is even, write $N=2m+2$. 
By \cref{prop:restricted-sum-sidon-prime}, we have
$M=\binom N2$, $A$ is Sidon, and $2A\cap S_d=\varnothing$.
Let
\[
P(X)=\prod_{a\in A}(X-a).
\]
Consider the even auxiliary polynomial
\[
f(X)= -(-1)^{m+1} + \sum_{a\in A} \frac{(X+a)^{M+m}(X-a)^{m+1}}{P'(a)}.
\]
The degree and multiplicity calculation for $f$ was carried out in the proof of Proposition~\ref{prop:restricted-sum-sidon-prime}. Indeed, one has $\deg f\le M$ and $f$ has a zero of multiplicity at least $m+1$ at every point $a\in A$. Since $N(m+1)>M=\binom N2$, we must have $f\equiv0$.

For $a\in B=A\setminus\{0\}$, we may repeat the derivative comparison
from the even case in the proof of Proposition~\ref{prop:restricted-sum-sidon-prime}.
The rational sums appearing there are
\[
  S_1(a)=\sum_{u\in A}\frac{1}{P'(u)(a+u)} \qquad\text{and}\qquad S_2(a)=\sum_{u\in A}\frac{1}{P'(u)(a+u)^2}.
\]
Thus we only need to check that $a+u\ne0$ for every $u\in A$.
If $u\ne a$, then $a+u$ is a restricted sum, so $a+u\in A\widehat{+}A=S_d\subset\mathbb F_p^*$.
This includes the case $u=0$, where $a+u=a\in B\subset S_d$.
If $u=a$, then $a+u=2a\ne0$, since $a\ne0$.
Thus all denominators in $S_1(a)$ and $S_2(a)$ are nonzero, and
the same derivative comparison gives
\begin{equation}
\label{eq:zero-even-local-differential}
  2aQ_0'(a)=(N+1)Q_0(a),
  \qquad \text{where} \quad Q_0(X)=P(-X).
\end{equation}

Define $H_0(X):=2XQ_0'(X)-(N+1)Q_0(X)$.
By equation~\eqref{eq:zero-even-local-differential}, $H_0$ vanishes at every
$a\in B$. Since $0\in A$, we have $P(0)=0$, so $Q_0(0)=0$, and
$H_0(0)=0$. Thus $H_0$ vanishes at every element of $A$.

The leading coefficient of $H_0$ is $2N-(N+1)=N-1$. 
Since $N<p$, we have $N-1\neq0$ in $\mathbb F_p$. Thus $H_0(X)=(N-1)P(X)$.
That is, 
\begin{equation}\label{eq:H0-equals-P}
2XQ_0'(X)-(N+1)Q_0(X)=(N-1)P(X).
\end{equation}

Now write $P(X)=P_{\rm ev}(X)+P_{\rm odd}(X)$, where $P_{\rm ev}$ and $P_{\rm odd}$ are the even and odd parts of $P$.
Since $N$ is even, $Q_0(X)=P(-X)=P_{\rm ev}(X)-P_{\rm odd}(X)$.
Comparing even and odd parts in equation~\eqref{eq:H0-equals-P}, we get
$XP_{\rm ev}'(X)=NP_{\rm ev}(X)$ and $XP_{\rm odd}'(X)=P_{\rm odd}(X)$. 
Since $\deg P=N<p$, these differential equations imply $P_{\rm ev}(X)=X^N$ and $P_{\rm odd}(X)=\beta X$ for some $\beta\in\mathbb F_p$. Thus
\[
P(X)=X^N+\beta X=X(X^{N-1}+\beta).
\]
Since $A$ has $N$ distinct elements, $\beta\neq0$. Thus $B=\{x\in\mathbb F_p^*:x^{N-1}=-\beta\}$.
Thus $B$ is a multiplicative coset of the subgroup of $(N-1)$-st roots
of unity, and so there exists $\gamma\in\mathbb F_p^*$ and a multiplicative subgroup $H\leq\mathbb F_p^*$ of order $N-1$ such that $B=\gamma H$.

Since $0\in A$, we have $B\subset S_d$.  Thus $\gamma\in S_d$, and therefore $H=\gamma^{-1}B\le S_d$.
\end{proof}


By \cref{prop:zero-even-root-coset}, since $B=\gamma H$ with $\gamma\in S_d$, multiplying the identity $A\widehat{+}A=S_d$ by $\gamma^{-1}\in S_d$ preserves $S_d$.  Thus, after this normalization, we may assume
\[
A=\{0\}\sqcup H,\qquad H\le S_d,\qquad |H|=n=2m+1.
\]
Then
\[
S_d=H\sqcup(H\widehat{+}H),\qquad M=\frac{n(n+1)}2=n(m+1).
\]
The cases $m=0$ and $m=1$ correspond to $M=1$ and $M=6$,
respectively; these are among the small cases discussed in
\cref{rem:small-exceptions}.  We now assume $m\ge2$.

\begin{proposition}\label{prop:zero-even}
Let $d\ge2$, and let $p\equiv1\pmod d$ be a prime with $p\ge4d+1$.  There is no set $A\subset\mathbb F_p$ such that $A\widehat{+}A=S_d$, $0\in A$, and $|A|=2m+2$ for some $m\ge2$.
\end{proposition}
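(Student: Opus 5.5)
By \cref{prop:zero-even-root-coset} and the normalization after it, we may assume $A=\{0\}\sqcup H$ with $H\le S_d$ of order $n=2m+1$, $m\ge2$, $N=2m+2$ and $M=n(m+1)$. Then
\[
P(X)=\prod_{a\in A}(X-a)=X(X^n-1).
\]
The plan is to imitate \cref{prop:odd-coset-contradiction}: push the Hasse-derivative comparison one order further than in \cref{prop:restricted-sum-sidon-prime}. Once $P$ is explicit, the resulting identity becomes a polynomial identity in $m$, and we will show it fails for $m\ge2$.

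First, I reuse the even auxiliary polynomial $f$ from the proof of \cref{prop:zero-even-root-coset}, which satisfies $f\equiv0$. So $\E^{(r)}f(a)=0$ for every $a\in H$ and every $r$. Second, since $P$ is known, everything can be made explicit at $a\in H$. We have $a^n=1$, hence $a^M=1$ and $(2a)^M=2^M$; this value is $\ne1$ because $2a\notin S_d$. Also $P'(a)=n$, and
\[
\sum_{u\in A}\frac{1/P'(u)}{X+u}=-\frac1{P(-X)}=\frac{1}{X(X^n+1)},
\]
using $n$ odd. Differentiating this rational function gives closed forms for
\[
S_k(a)=\sum_{u\in A}\frac{1}{P'(u)(a+u)^k},\qquad k=1,2,3,
\]
as explicit rational multiples of $a^{-k}$. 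For instance $S_1(a)=1/(2a)$, and $S_2,S_3$ come from the derivatives of $1/(X^{n+1}+X)$ at $X=a$, where $a^{n}=1$.

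Third, I redo the expansion of the coefficients of $Y^{m+1}$, $Y^{m+2}$ and $Y^{m+3}$ in $f(a+Y)$, as in \cref{prop:restricted-sum-sidon-prime}. In each case the off-diagonal summands reduce, via $s^M=1$ and $t=2a-s$, to a polynomial part plus principal parts in $1/s,1/s^2,1/s^3$. The polynomial part dies by \cref{lem:lagrange}. What survives is a diagonal correction proportional to $c_a(2a)^{m+1-k}\big((2a)^M-1\big)$, plus a combination of $(2a)^{m-\cdot}S_k(a)$ whose coefficients are built from
\[
\binom{M+m}{m+1},\qquad \binom{M+m}{m+2},\qquad \binom{M+m}{m+3}.
\]
The $Y^{m+1}$ equation \eqref{eq:first-even-derivative} determines $2^M-1$ in terms of $C_1$, namely $2^M-1=-2nC_1\cdot(\text{explicit constant})$. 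The $Y^{m+2}$ equation will be automatically consistent, since it produced the differential equation already solved by $P$; this is a useful check on the closed forms for $S_2$. Substituting into the $Y^{m+3}$ equation yields an identity of the form
\[
C_1\cdot\frac{\Psi(m)}{(m+2)(m+3)}=0
\]
with $\Psi$ an explicit integer polynomial. Cross-checking with \cref{rem:small-exceptions}, $\Psi$ should vanish at $m=0,1$ (the genuine examples $M=1,6$), and the goal is to show that it factors as $m(m-1)$ times a factor with no roots at integers $m\ge2$. All factors are then integers smaller than $p$ in absolute value, because $M+m+1<p$. They are therefore nonzero in $\Fp$, which gives the contradiction.

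The main obstacle is the $Y^{m+3}$ bookkeeping. It involves three rational terms, with the partial-fraction expansions of $(2a-s)^{m+1}/s^3$ and its relatives, together with the second-order diagonal correction $\binom{M+m}{2}$. It also requires correctly evaluating $S_3(a)$ at the point $0\in A$, whose weight is $1/P'(0)=-1$. If this becomes unwieldy, my fallback is the device of \cref{prop:odd-coset-contradiction}. I would pick an auxiliary polynomial whose degree equals the total forced multiplicity, for example shifting the exponent of $(X+a)$ so that $\deg\le M+m+1=N(m+1)$, so that it equals $\lambda P^{m+1}$ rather than vanishing. Comparing only the first two nontrivial coefficients against the explicit expansion of $\big(P(a+Y)/(P'(a)Y)\big)^{m+1}$ should then give the contradiction with one fewer derivative.
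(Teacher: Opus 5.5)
Your route is correct, and it differs from the paper's. The paper does not reuse $f$. Instead it introduces a new polynomial $\Psi(X)=-(-1)^{m+1}+\sum_{h\in H}c_h(X+h)^{M+m-1}(X-h)^{m+1}$, summed over $H$ only. It shows that $X^nQ(X)^m\mid\Psi$ with $\deg\Psi\le M=\deg(X^nQ^m)$, so $\Psi=\lambda X^nQ^m$. It then compares the coefficients of $Y^m,Y^{m+1},Y^{m+2}$ in $\Psi(1+Y)=\lambda(1+Y)^nQ(1+Y)^m$:
\begin{itemize}
\item the first coefficient fixes $\lambda$;
\item the second gives \eqref{eq:zero-even-coeff-m+1};
\item the third leaves $C_0\,m(m-1)(2m+3)(3m+1)(4m+7)/(3(m+2))=0$.
\end{itemize}
You keep the already-established identity $f\equiv0$. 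This buys you three things: no new divisibility check (the paper omits those details), no unknown scalar $\lambda$, and no right-hand-side expansion such as $U(Y)^m$. Both routes need three coefficient comparisons and third-order sums ($T_3$ in the paper, $S_3(a)$ for you). The difference is which comparison is ``spent'': in the paper it is the one that fixes $\lambda$, while in yours it is the $Y^{m+2}$ equation, which is just the differential equation already used to find $P$. The price is going one order higher in the Hasse expansion, with principal parts up to $s^{-3}$.

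The computation you left as ``the goal'' does come out as you predicted. Put $C_1=\binom{M+m}{m+1}$. Then
\[
S_1(a)=-\frac{1}{2a},\qquad S_2(a)=-\frac{n+2}{(2a)^2},\qquad S_3(a)=-\frac{3n+4}{(2a)^3}.
\]
The $Y^{m+1}$ equation gives $2^M-1=nC_1$, which is the same relation as \eqref{eq:zero-even-coeff-m+1}. The $Y^{m+2}$ equation reduces to the binomial identity $(M-1)C_1=(m+2)\binom{M+m}{m+2}$, so it is automatic. The $Y^{m+3}$ equation, with diagonal correction $\binom{M+m}{2}(2a)^{m-2}(2^M-1)/n$, becomes
\[
0=\E^{(m+3)}f(a)=C_1(2a)^{m-2}\,\frac{2m(m-1)(m+1)(2m+3)(M+m)}{(m+2)(m+3)} .
\]
This vanishes at $m=1$, consistent with the genuine example, where $2^6-1=63=3\binom72$. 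For $m\ge2$ it is nonzero in $\F_p$, because every factor is a positive integer below $p$ (as $M+m<2M+1\le p$). That gives the contradiction.

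Two small corrections:
\begin{itemize}
\item Since $P(-X)=X(X^n+1)$, the partial-fraction identity is $\sum_{u\in A}\frac{1}{P'(u)(X+u)}=-\frac{1}{X(X^n+1)}$, so $S_1(a)=-\frac1{2a}$, not $\frac1{2a}$. This is harmless for the final contradiction, because the three equations are homogeneous in $(2^M-1,S_1,S_2,S_3)$. As written, however, it would yield $2^M-1=-nC_1$, which the $m=1$ example refutes.
\item ``No integer roots for $m\ge2$'' would not by itself give nonvanishing in $\F_p$. What you actually need, and what holds here, is that the factor splits into integers smaller than $p$.
\end{itemize}
Your fallback is not needed.
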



\begin{proof}
By the preceding normalization, we may assume that
$A=\{0\}\sqcup H$, where $H\le S_d$ has order $n=2m+1$ and $S_d=H\sqcup(H\widehat{+}H)$.

Let
\[
  Q(X)=X^n-1=\prod_{h\in H}(X-h) \quad \text{and} \quad  c_h=\frac1{Q'(h)}.
\]
Define
\begin{equation}\label{eq:zero-even-Phi}
  \Psi(X)=-(-1)^{m+1} + \sum_{h\in H}c_h(X+h)^{M+m-1}(X-h)^{m+1}.
\end{equation}

As in the proofs of the preceding propositions, one has $\deg\Psi\le M$.
In addition, one can also verify that $ X^nQ(X)^m\mid\Psi(X)$. 
We omit the repeated details. 


Since $X^n$ and $Q(X)$ are coprime and $\deg(X^nQ(X)^m)=n+mn=n(m+1)=M$,
we have
\[
  \Psi(X)=\lambda X^nQ(X)^m
\]
for some $\lambda\in\Fp$.

For $j=1,2,3$, put
\[
  T_j=\sum_{h\in H}\frac1{Q'(h)(1+h)^j}.
\]
Since $n$ is odd,
\[
  \sum_{h\in H}\frac1{Q'(h)(X+h)}=-\frac1{Q(-X)}=\frac1{X^n+1}.
\]
Evaluating at $X=1$ and differentiating gives $T_1=1/2$, $T_2=n/4$, and $T_3=n/8$.
Also, let $C_0=\binom{M+m-1}m, C_1=\binom{M+m-1}{m+1}$, and $C_2=\binom{M+m-1}{m+2}$.
Since $H\le S_d$ and $2H\cap S_d=\varnothing$, we have $2\notin S_d$, so $2^M\ne1$.

We compare the coefficients of $Y^m$ in $\Psi(1+Y)=\lambda(1+Y)^nQ(1+Y)^m$.
First consider the left-hand side.  For $u\in H$, put $s=1+u$.  
As in the proof of the previous propositions, 
Thus
\[
  \E^{(m)}\Psi(1) = C_0 2^{m+1}\sum_{u\in H}\frac{c_u}{1+u} = C_0 2^{m+1}T_1.
\]
Since $T_1=1/2$, $ \E^{(m)}\Psi(1)=2^mC_0$.
On the right-hand side, since $Q(1+Y)=(1+Y)^n-1=nY+O(Y^2)$, we have $ (1+Y)^nQ(1+Y)^m = (1+O(Y))(nY+O(Y^2))^m$.
Thus the coefficient of $Y^m$ is $n^m$, and the coefficient of $Y^m$ in $\lambda(1+Y)^nQ(1+Y)^m$ is $\lambda n^m$.  Comparing the two sides gives
\begin{equation}\label{eq:zero-even-coeff-m}
  \lambda n^m=2^mC_0.
\end{equation}

We next compare the coefficients of $Y^{m+1}$ in $\Psi(1+Y)=\lambda(1+Y)^nQ(1+Y)^m$.
Again, we skip the repeated details, and one obtains that
using $T_1=1/2,T_2=n/4$, the total coefficient of $Y^{m+1}$ on the
left-hand side is
\[
  2^{m-1}\left( \frac{2^M-1}{n} +(m+1)(C_0-C_1) +nC_1\right).
\]

We now compute the right-hand side.  Write
\[
  U(Y)=\frac{Q(1+Y)}{nY} =\frac{(1+Y)^n-1}{nY} =1+mY+O(Y^2),
\]
since $n=2m+1$.  Then $ Q(1+Y)^m=(nY)^mU(Y)^m$.
Thus
\[
  \lambda(1+Y)^nQ(1+Y)^m = \lambda n^mY^m(1+Y)^nU(Y)^m.
\]
Thus the coefficient of $Y^{m+1}$ on the right-hand side is
\[
  \lambda n^m \cdot [Y]\bigl((1+Y)^nU(Y)^m\bigr).
\]
Since $(1+Y)^n=1+nY+O(Y^2)$ and $U(Y)^m=(1+mY+O(Y^2))^m=1+m^2Y+O(Y^2)$, the coefficient of $Y$ in $(1+Y)^nU(Y)^m$ is $n+m^2=(m+1)^2$.
Using \eqref{eq:zero-even-coeff-m}, namely $\lambda n^m=2^mC_0$, the right-hand side coefficient is $2^mC_0(m+1)^2$.

Comparing the two sides gives
\[
  \frac{2^M-1}{n} +(m+1)(C_0-C_1) +nC_1 = 2C_0(m+1)^2.
\]
Since $C_1=C_0(M-1)/(m+1)$, this simplifies to 
\begin{equation}\label{eq:zero-even-coeff-m+1}
  2^M-1=n\frac{M+m}{m+1}C_0.
\end{equation}

We now compare the coefficients of $Y^{m+2}$ in $\Psi(1+Y)=\lambda(1+Y)^nQ(1+Y)^m$.
We first consider the left-hand side.  
For $u\in H$, put
$s=1+u$.
Using $T_1=1/2,T_2=n/4$, and $T_3=n/8$, the coefficient of $Y^{m+2}$ 
the total coefficient of $Y^{m+2}$ on the
left-hand side is
\[
  2^{m-2}\left(\frac{(M+m-1)(2^M-1)}n +\frac{m(m+1)}2C_0 + (m+1)^2C_1 -\frac{m(3m+1)}2C_2 \right).
\]
We omit the repeated details again. 

We now compute the right-hand side.  Recall that $  Q(1+Y)=nY\,U(Y)$, where $U(Y)=((1+Y)^n-1)/(nY)$.
Thus $\lambda(1+Y)^nQ(1+Y)^m = \lambda n^mY^m(1+Y)^nU(Y)^m$.
Therefore the coefficient of $Y^{m+2}$ on the right-hand side is $\lambda n^m \cdot [Y^2]\bigl((1+Y)^nU(Y)^m\bigr)$.
Write
\[
  U(Y) = 1+mY+\frac{m(2m-1)}3Y^2+O(Y^3),
\]
since $n=2m+1$.  Thus
\[
  U(Y)^m = 1+m^2Y+ \left(\frac{m^2(2m-1)}3+\binom m2m^2  \right)Y^2+O(Y^3).
\]
Since
\[
  (1+Y)^n = 1+nY+\binom n2Y^2+O(Y^3) = 1+nY+nmY^2+O(Y^3),
\]
the coefficient of $Y^2$ in $(1+Y)^nU(Y)^m$ is
\begin{equation}\label{eq:D2}
  D_2 := nm+nm^2+\frac{m^2(2m-1)}3+\binom m2m^2.
\end{equation}
Thus, using \cref{eq:zero-even-coeff-m}, the right-hand side coefficient is $2^mC_0D_2$.

Comparing the two sides gives
\begin{equation}\label{eq:zero-even-coeff-m+2}
  \frac{(M+m-1)(2^M-1)}n +\frac{m(m+1)}2C_0 + (m+1)^2C_1 -\frac{m(3m+1)}2C_2 = 4C_0D_2.
\end{equation}

Now substitute \cref{eq:zero-even-coeff-m+1},
\[
  C_1=C_0\frac{M-1}{m+1}, \qquad C_2=C_0\frac{(M-1)(M-2)}{(m+1)(m+2)},
\]
\cref{eq:D2}, and $n=2m+1$, $M=(2m+1)(m+1)$ into \cref{eq:zero-even-coeff-m+2}.  After factoring out $C_0$, we have
\[
\begin{aligned}
&\frac{(M+m-1)(M+m)}{m+1} + \frac{m(m+1)}2 + (m+1)(M-1) \\
&\quad -\frac{m(3m+1)(M-1)(M-2)}{2(m+1)(m+2)}
-4\left(nm+nm^2+\frac{m^2(2m-1)}3+\binom m2m^2\right) \\
&= -\frac{m(m-1)(2m+3)(3m+1)(4m+7)}{3(m+2)}.
\end{aligned}
\]
Thus
\[
  0=-C_0\frac{m(m-1)(2m+3)(3m+1)(4m+7)}{3(m+2)}.
\]
For $m\ge2$, every factor in the above equation is nonzero in $\Fp$, including $C_0$ since $M+m-1<p$; the remaining integer factors are all smaller than $p$.  This contradiction proves the proposition.
\end{proof}

\section{Proof of \cref{thm:main} and exceptional cases}\label{sec:6}

We now assemble the case analysis.  \cref{prop:restricted-sum-sidon-prime}
first reduces any decomposition to the critical case $M=\binom{|A|}{2}$, with $0\in A$ or $|A|$ odd.  
The subsequent case analyses, namely \cref{prop:zero-odd}, \cref{prop:odd-coset-contradiction}, \cref{prop:zero-even}, eliminate all possibilities except the small triangular values $M=1,3,6$.  These are all below the stated range $M\ge7$.

\begin{proof}[Proof of \cref{thm:main}]
Suppose, for contradiction, that $A\rsum A=S_d$. 
Since $p\ge 7d+1$, we have $M\ge 7$.

By Proposition~\ref{prop:restricted-sum-sidon-prime}, $A$ is Sidon, $2A\cap S_d=\varnothing$, $M=\binom{|A|}{2}$, and either $0\in A$ or $|A|$ is odd.

If $0\in A$ and $|A|$ is odd, this contradicts
\cref{prop:zero-odd}.

Next suppose $0\notin A$ and $|A|$ is odd.  Write $|A|=2m+1$.
By \cref{prop:odd-coset-contradiction}, we must have
$m\le1$.  Since $|A|\ge2$, the only remaining possibility is
$m=1$.  Then $|A|=3$, so $M=\binom{3}{2}=3$, contradicting $M\ge7$.

It remains to consider the case $0\in A$ and $|A|$ is even.  Write
$A=\{0\}\sqcup B$ with $|B|=2m+1$.
By Proposition~\ref{prop:zero-even}, we must have $m\le1$.
If $m=0$, then $|A|=2$, and so $M=\binom{2}{2}=1$, contradicting $M\ge7$.  If $m=1$, then $|A|=4$, and so $M=\binom{4}{2}=6$, again contradicting $M\ge7$.

All cases lead to a contradiction.  Therefore no such set $A$ exists.
\end{proof}


\begin{remark}\label{rem:small-exceptions}
The condition $p\ge7d+1$ is equivalent to $M=|S_d|\ge7$.  Below this range, $M\le6$, and the small cases can be listed explicitly.

\begin{itemize}
    \item If $p=d+1$ is prime, then $S_d=\{1\}$.  In this case $A=\{x,1-x\}$ with $x\ne \frac12$ satisfies $A\rsum A=S_d$. Conversely, suppose that $A\rsum A=\{1\}$. Fix $a\in A$. Then every $b\in A\setminus\{a\}$ must satisfy $b=1-a$, and so $|A|\le2$. Since the restricted sumset is nonempty, $|A|=2$, so $A=\{x,1-x\}$.

    \smallskip
    \item If $p=2d+1$ is prime, then $|S_d|=2$, and no decomposition exists.
Indeed, assume that $A\rsum A=S_d$.  Fix $a\in A$.  Then the map $A\setminus\{a\}\rightarrow S_d,\;  b\mapsto a+b$
is injective, and so $|A|-1\le |S_d|=2$.  Thus $|A|\le3$.

If $|A|=2$, then $A\rsum A$ consists of a single element, so it cannot be $S_d$.  
If $|A|=3$, say $A=\{a_1,a_2,a_3\}$, then the three
restricted sums $a_1+a_2, a_1+a_3, a_2+a_3$ are distinct.  Thus $|A\rsum A|=3$, again impossible
since $|S_d|=2$.  Therefore no decomposition exists.

\smallskip
\item If $p=3d+1$ is prime, then $|S_d|=3$, and the unique
decomposition is $A=-S_d$ with $|A|=3$.
Indeed, first, we show that $A=-S_d$ gives a decomposition when $|A|=3$.  
Since $|S_d|=3$, we may write
$S_d=\{1,\omega,\omega^2\}$, where $\omega$ has order $3$.  Then $ 1+\omega+\omega^2=0$.
Thus $(-1)+(-\omega)=\omega^2$, $(-1)+(-\omega^2)=\omega$, and $(-\omega)+(-\omega^2)=1$.
Thus $(-S_d)\rsum(-S_d)=S_d$.

Now assume that $A\rsum A=S_d$. For each fixed $a\in A$, again, the map $A\setminus\{a\}\rightarrow S_d,\;  b\mapsto a+b$
is injective, and so $|A|\le4$.  The cases $|A|\le2$ are impossible since $|S_d|=3$.

If $|A|=4$, assume that $A\rsum A=S_d$. Then for every $a\in A$ the three sums $a+b$ with $b\ne a$ are all elements of $S_d$. Let $T=\sum_{a\in A}a$.
Since the sum of the elements of $S_d$ is zero, we obtain
$\sum_{b\in A, b\ne a}(a+b)=0$.
The left-hand side is
\[
        3a+\sum_{b\in A, b\ne a}b =3a+(T-a) =2a+T.
\]
Thus $2a+T=0$ for every $a\in A$, which is impossible since the elements of $A$ are distinct, a contradiction.

Assume $|A|=3$.  If $T=\sum_{a\in A}a$, then the restricted sums are $T-a$ for every $a\in A$.
Thus $T-A=S_d$, and so $A=T-S_d$.
It follows that $A\rsum A=2T-(S_d\rsum S_d)$.
Note that we have $S_d\rsum S_d=-S_d$.
Indeed, if $S_d=\{h_1,h_2,h_3\}$, then $h_1+h_2+h_3=0$, so the sum of any two distinct elements of $S_d$ is the negative of the third.  Then $2T+S_d=A\rsum A=S_d$.
Then
\[0=\sum_{s\in S_d}s=\sum_{s\in S_d}(2T+s)=6T+\sum_{s\in S_d}s=6T.\]
As $p=3d+1$ and $d\ge2$, we have $p\ne2,3$, so $T=0$.  Thus $A=-S_d$.

\smallskip
\item  There are no examples with $p=4d+1$ or $p=5d+1$.  
Indeed, in those cases $M=|S_d|$ is respectively $4$ or $5$. If $A\rsum A=S_d$, by \cref{prop:restricted-sum-sidon-prime}, $|A\rsum A|=M=\binom{|A|}{2}$,  but neither $4$ nor $5$ is of the form $\binom n2$ for some positive integer $n$.  This is a
contradiction.

\smallskip
\item Finally, if $p=6d+1$ is prime, then $|S_d|=6$.  In this case decompositions do occur.  
Let $H$ be the unique subgroup of $S_d$ of order $3$. Thus $-1 \notin H$.
Since $S_d$ is cyclic of order $6$, we have $S_d=H\sqcup(-H)$.
Writing $H=\{1,\omega,\omega^2\}$ with $1+\omega+\omega^2=0$, we get $H\rsum H=-H$.
Thus
\[
        (\{0\}\sqcup H)\rsum(\{0\}\sqcup H) = H\sqcup(H\rsum H) = H\sqcup(-H) = S_d.
\]
The same argument with $-H$ in place of $H$ gives the second example $A=\{0\}\sqcup(-H)$.

Conversely, if $p=6d+1$ and $A\rsum A=S_d$, then
\cref{prop:restricted-sum-sidon-prime} gives $M=\binom{|A|}{2}$.
Since $M=6$, we have $|A|=4$.  
Then \cref{prop:restricted-sum-sidon-prime} gives $0\in A$.
Writing $A=\{0\}\sqcup B$ and $|B|=3$, \cref{prop:zero-even-root-coset} gives $B=\gamma H$, where $H\le S_d$ has order $3$ and $\gamma\in S_d$.  Since $S_d/H$ has order $2$, the coset $\gamma H$ is either $H$ or $-H$. Thus the only decompositions for $p=6d+1$ are $A=\{0\}\sqcup H$ and $A=\{0\}\sqcup(-H)$.
\end{itemize}
\end{remark}


\section*{Acknowledgments}
The authors thank Ilya Shkredov for helpful discussions.  
S. Yoo was supported by the Institute for Basic Science (IBS-R029-C1).

\bibliographystyle{abbrv}
\bibliography{references}

\end{document}